\newcommand{\comment}[1]{}
    \newcommand{\set}[1]{{\left\{#1\right\}}}
\newcommand{\pa}[1]{{\left(#1\right)}}
\newcommand{\sq}[1]{{\left[#1\right]}}
\newcommand{\abs}[1]{{\left|#1\right|}}
\newcommand{\norm}[1]{{\left \|#1\right \|}}
\newcommand{\T}{\mathbb{T}}
\newcommand{\Z}{\mathbb{Z}}
\newcommand{\R}{\mathbb{R}}
\newcommand{\C}{\mathbb{C}}
\newcommand{\dg}{{\mathtt{D}_{\g,\cS}}}
\newcommand{\eps}{\varepsilon}
\newcommand{\pan}{{\mathcal Q}}
\newcommand{\na}{\widehat{n}}
\newcommand{\co}[1]{\textit{#1}}
\newcommand{\gr}[1]{\textbf{#1}}
\newcommand{\id}{\operatorname{Id}}
\newcommand{\ad}{\operatorname{ad}}
\newcommand{\jjap}[1]{\lfloor #1 \rfloor }
\newtheorem{prop}{Proposition}[section]
      \newtheorem{them}{Theorem}[]
    \newtheorem*{thm*}{Theorem}
    \newtheorem*{cor*}{Corollary}
    \newtheorem*{teo normal}{"Twisted Conjugacy" Theorem}
    \newtheorem{lemma}{Lemma}
    \theoremstyle{remark}
\newtheorem{rmk}{Remark}[section]
\theoremstyle{definition}
\newtheorem{defn}{Definition}
\numberwithin{equation}{section}
\numberwithin{thm}{section}
\numberwithin{defn}{section}
\numberwithin{prop}{section}
\numberwithin{cor}{section}
\numberwithin{lemma}{section}
\numberwithin{rmk}{section}
\newcommand{\g}{\gamma}
\newcommand{\s}{{\sigma}}
\newcommand{\N}{{\mathbb N}}
\newcommand{\cC}{{\mathcal C}}
\newcommand{\cH}{{\mathcal H}}
\newcommand{\cK}{{\mathcal K}}
\newcommand{\cL}{{\mathcal L}}
\newcommand{\cM}{{\mathcal M}}
\newcommand{\cN}{{\mathcal N}}
\newcommand{\cO}{{\mathcal O}}
\newcommand{\cQ}{{\mathcal Q}}
\newcommand{\cR}{{\mathcal R}}
\newcommand{\cS}{{\mathcal S}}
\newcommand{\cT}{{\mathcal T}}
\newcommand{\fa}{{\bal}}
\newcommand{\fb}{{\bbt}}
\newcommand{\fm}{{\mathfrak{m}}}
\newcommand{\tk}{{\mathtt{d}}}
\newcommand{\tm}{{\mathtt{m}}}
\newcommand{\sob}{{\mathtt{w}}}
\newcommand{\tD}{{\mathtt{D}}}
\newcommand{\tK}{{\mathtt{K}}}
\newcommand{\tM}{{\mathtt{M}}}
\newcommand{\be}{{\bf e}}
\newcommand{\al}{{\alpha}}
\newcommand{\bt}{{\beta}}
\newcommand\norma[1]{\left\lVert#1\right\rVert}
\newcommand{\im}{{\rm i}}
\newcommand{\jap}[1]{\langle #1 \rangle}
\newcommand{\e}{{\varepsilon}}
\newcommand{\meas}{{\rm meas}}
\newcommand{\tw}{{\mathtt{w}}}
\newcommand{\nnorm}[1]{{\left\vert\kern-0.25ex\left\vert\kern-0.25ex\left\vert #1 
    \right\vert\kern-0.25ex\right\vert\kern-0.25ex\right\vert}}
\newcommand{\buu}{{u^\bal}{\bar{u}^\bbt}}
\newcommand{\modi}[1]{\abs{u_{#1}}^2}
\newcommand{\bal}{{\bm \al}}
\newcommand{\bbt}{{\bm \bt}}
\newcommand{\Hrp}{\cH_{r,p}}
\newcommand{\nore}[1]{\abs{#1}_{r,p}}
\newcommand{\zero}[1]{#1^{(0)}}
\newcommand{\zeroR}[1]{#1^{(0,\cR)}}
\newcommand{\zeroK}[1]{#1^{(0,\cK)}}
\newcommand{\due}[1]{#1^{(-2)}}
\newcommand{\buon}[1]{#1^{\ge 2}}
\newcommand{\call}{\mathcal{L}}
\newcommand{\dueK}[1]{#1^{(-2,\cK)}}
\newcommand{\crac}{{\mathtt C'}}
\renewcommand{\o}{{\omega}}
\newcommand{\betta}{{\omega}}
\newcommand{\ub}{v}
\newcommand{\zb}{z}
\newcommand{\bo}{{\nu}}
\newcommand{\bO}{{ \Omega}}
\begin{document}
\author{Luca Biasco}
\address{Università degli Studi Roma Tre}
\email{biasco@mat.uniroma3.it}

\author{Jessica Elisa Massetti}
\address{Università degli Studi Roma Tre}
\email{jmassetti@mat.uniroma3.it}

\author{Michela Procesi}
\address{Università degli Studi Roma Tre}
\email{procesi@mat.uniroma3.it}
 
 \title{On the  construction of Sobolev Almost periodic invariant tori for the 1d NLS}
\begin{abstract}
We discuss a method for the construction of  almost periodic solutions of the 
one dimensional  analytic NLS
with only Sobolev regularity 
both in time and space.
This is the first result  of this kind for PDEs. 
\end{abstract} 
\maketitle
\setcounter{tocdepth}{1} 

\section{Overview and main result}
In KAM theory for PDEs, one of the most challenging problems  of the last twenty years is the construction of \co{almost-periodic} solutions\footnote{i.e. solutions which are limit (in the uniform topology in time) of quasi-periodic solutions}. Very few results are known in this direction, and all of them investigate models with \co{external parameters} in order to avoid resonances and 
focus on the small divisor problem. In particular, in all the existent literature, the construction of almost periodic solutions is achieved at the cost of an \co{extremely high} decay of their Fourier coefficients, which approach zero 
  super-exponentially, exponentially or sub-exponentially (Gevrey).  Indeed, because of the fact that the classical KAM procedure is not uniform in the dimension, one cannot  naively construct a quasi-periodic solution supported on a $n$-dimensional invariant torus and then take the limit $n\to\infty$: one would fall on the elliptic fixed point. Relying on an NLS with multiplicative potential (producing an infinite set of free parameters) and smoothing nonlinearity, P\"oschel  (partially) tackles this problem in \cite{Poschel:2002}, by iteratively constructing its solutions though successive small perturbations of finite-dimensional tori, parametrized though action-angle variables and eventually characterized by a very strong compactness property: in order to overcome the dependence  of the KAM estimates on their dimension, the radii of these tori have to shrink super-exponentially, this leading to very regular solutions. See also \cite{GX13} for a generalization of P\"oschel's approach to the analytic cathegory, by using T\"oplitz-Lipschitz function techniques. \\ Bourgain understood that when the  dimension grows to infinity, action-angle variables become the Achilles' heel of the KAM procedure (they are not even well defined, in general) and in his pioneering work \cite{Bourgain:2005} on the quintic NLS with Fourier multipliers (providing external parameters in $\ell^\infty$), he proposed a different approach by working directly in cartesian coordinates, without introducing any action-angle variables, and relying on a Diophantine condition which is taylored for the infinite dimension. For most choices of the parameters, this lead to the construction of almost periodic invariant tori which support Gevrey solutions. 

The recent work \cite{BMP:almost} extends the techniques of \cite{Bourgain:2005} and proposes a novel, flexible approach which allows to construct in a unified framework,  both maximal and elliptic invariant tori of any dimension which are the support of the desired Gevrey solutions, for an NLS with Fourier multipliers which does not necessarily preserve momentum. The persistence of the invariant tori is achieved though an abstract normal form theorem "à la Herman", whose estimates are  \co{uniform in the dimension} (see \cite[Theorems 3 and 7.1]{BMP:almost}).  See also \cite{Berti-Montalto:memoirs, Massetti:ETDS, Massetti:APDE} 
for a survey on this technique.

The possibility of constructing almost-periodic solutions for a \co{fixed} PDE, i.e. "eliminating" the external parameters through amplitude-frequency modulation, appears to be intimately related to the regularity issues. Roughly speaking, a fast decay of the "actions" (needed for the scheme to converge) leads to a  weak  modulation of frequencies which in turn results in bad bounds on the small divisors.
It then becomes fundamental to look for  almost-periodic solutions in lower regularity spaces. However this appears to be a very difficult problem, due to the presence of extremely small divisors.
\\
An analogous problem with rapidly vanishing small divisors
arises in Birkhoff Normal Form theory for PDEs.
Indeed in the analytic or Gevrey case one has sub-exponential stability times
(see \cite{Faou-Grebert:2013} and \cite{Cong}),
whereas in the Sobolev case the best known estimates have a power growth in the Sobolev exponent
(see \cite{Bambusi-Grebert:2006}, \cite{FI}, \cite{Berti-Delort},\cite{BMP:2019}).
\\
The counterpart of total and long time stability results is the construction of unstable trajectories, which undergo  growth of the Sobolev norms, see \cite{Bou1,CKSTT, GuaKa, GHHMP, GGPP}.

In the context of quasi-periodic solutions there is a wide literature regarding solutions of finite regularity. However most of the interest is in the case of a non-linearity which is only Sobolev. The strategy is to apply a Nash-Moser scheme (generalizing \cite{CW}, \cite{Bo4}) and prove tame estimates on the inverse of the linearized equation at an approximate solution, either by multiscale methods, \cite{BB1}, or by reducibility \cite{BBM1}.   Of course one can apply this techniques also  in the case of analytic non-linearities. Another possibility is to construct solutions which are analytic in time and only finite regularity in space (see, e.g. \cite{Poschel:1996}, \cite{CFP}). Note  however that solutions obtained with such methods are often actually smooth (by bootstrap arguments).  
Unfortunately it is not at all clear how  (or even whether it is possible)  to  extend  such ideas to almost-periodic solutions. 
\\
A main difference with the quasi-periodic case is that for almost-periodic solutions
the topology of the phase space is crucial.
Indeed, in the quasi-periodic case  (at least for semi-linear PDEs)
one typically 
 looks for an {\sl analytic embedded finite} dimensional torus 
in a fixed phase space of $x$-dependent functions,
by modulating the analyticity strip in the angles. Then
the  analyticity in time directly follows.
On the other hand, it is not obvious at all (and clearly it strongly
depends on the topology of the phase space), whether
the  {\sl embedded\footnote{In the infinite-dimensional case, even the definition of such an embedding requires accuracy in the choice of the topology. Indeed most of the existent literature bypass this issue and directly construct the solutions as limit of quasi-periodic ones.} infinite} dimensional torus  is analytic.
Anyway, the analyticity in time does not follow
since the map $t\mapsto\o t$ is  not even continuous\footnote{At 
least if $\sup_j |\o_j|=\infty$ as it is typical in PDEs.}.
Generically, almost-periodic solutions are not continuous trajectories in the phase space.
However, such solutions  can be regular as complex valued functions of time and space,
depending on the regularity of the phase space. See Remark \ref{sona}.
\\
Taking all the advantage from the flexible construction proposed in \cite{BMP:almost}, we present here the very first result of persistence of almost-periodic solutions with finite regularity both in time and space.  We stress that our solutions are not maximal tori but instead are mostly localized on a {\it sparce lattice}. \\ 
The core of our strategy is that in constructing solutions mostly localized in such lattices we may impose {\it very strong} Diophantine conditions, see Definition \ref{diomichela}, so that our small divisors can be controlled similarly to the Gevrey case of \cite{BMP:almost}. The key  points are the definition of diophantine vectors \ref{diomichela}, the measure estimates of Theorem 2 and the bounds on the homological equation  in Lemma \ref{mah} .


We present in this note our strategy in the simplest possible setting. To this purpose, we consider families of NLS equations on the circle with external parameters of the form:
\begin{equation}\label{NLS}
\im u_t + u_{xx} - V\ast u +
f(|u|^2)u=0\,.
\end{equation}
Here $\im=\sqrt{-1}$,
$u=\sum_{j\in\Z}  u_j e^{\im j x},$
$V\ast$ is a Fourier multiplier 
\begin{equation}
\label{vu}
V\ast u = \sum_{j\in\Z} V_j u_j e^{\im j x}\,,\quad \pa{V_j}_{j\in\Z}\in 
[-\nicefrac12,\nicefrac12]^\Z\subset \ell^\infty(\R)
\end{equation}
and $f(y)$ is real analytic in $y$ in a neighborhood of $y=0$. We shall assume that $f(0)=0$. 
By  analyticity, for some $\mathtt R>0$ we have
\begin{equation}\label{analitico}
f(y)= 
\sum_{d=1}^\infty f^{(d)} y^d\,,\quad
|f|_{\mathtt R}:=\sum_{d=1}^\infty|f^{(d)}|\mathtt R^d <\infty \,,
\end{equation}
\\
We look for solutions in Fourier series
$
u(x) = \sum_{j\in \Z} u_j e^{\im j x},
$
where $\pa{u_j}_{j\in\Z}$
belongs to the scale of Banach spaces 
\begin{equation}\label{pecoreccio}
\sob_{p}
:= 
\set{u:= \pa{u_j}_{j\in\Z}\in\ell^2(\C)\; : \quad \norm{u}_{p}:= \sup_{j\in\Z}\abs{u_j} \jjap{j}^{p}< \infty}\,, \quad p>1\,,
\end{equation}
where\footnote{Obviously one could also take the more standard weight $\jap{j}:= \max\{1, |j|\}$, which
generates the same Banach space. 
We made such choice for merely technical reasons.} 
$\jjap{j} := \max\{2, |j|\}$.
As it is common habit, we endow $\sob_{p}\subset \ell^2$ with the symplectic structure inherited from $\ell^2$. 
Note that $u(x)\in C^k$ for every $k<p-1.$

It is well known that \eqref{NLS}
is a hamiltonian 
system  with Hamiltonian 
\begin{equation}\label{Hamilto0}
H_V(u):=\sum_{j\in\Z} (j^2+V_j) |u_j|^2+P\,,\quad \mbox{with}\quad
 P:= \int_\T F(x,|\sum_j u_j e^{\im j x}|^2) dx \,,\quad F(y):=\int_0^y f(s) ds\,.
\end{equation}

\medskip


\noindent
We  consider as ``tangential sites'' the following (infinite) subset of $\Z$ 
\begin{equation}
\cS :=  \set{ 2^h,\quad h\in\N} \,,\quad \Z= \cS\cup \cS^c
\end{equation}
and define the set of ``tangential frequencies''
\begin{equation}
\label{panc}
\pan_\cS := \set{\nu = (\nu_j)_{j\in\cS}\in \R^\cS\, : \, \abs{\nu_j- j^2} < \frac12}.
\end{equation}
The main result is the following
\begin{them}\label{toro Sobolevp}
	Consider a translation invariant NLS Hamiltonian 
	as in \eqref{NLS}.
	For any $p>1$,  $\g>0$  there exists $\e_*=\e_*(p)>0$ such that,
	for all $r>0$
	satisfying 
	\begin{equation}
	\label{cornettonep}
	\e:=\frac{\abs{f}_{\mathtt R}}{\g \mathtt R} r^2 \le \e_*
	\end{equation}
	for every 
	\begin{equation}\label{pifferello}
\sqrt I\in { B}_{r}(\sob_{p})\quad
\mbox{with}\quad I_j=0\quad \mbox{for}\quad j\in\cS^c
\end{equation}
	and 
	for any    $W\in [-\nicefrac14,\nicefrac14]^{\cS^c}$  such that $W_0\ne 0$,
	the following holds.
	\\
	There exist
	a positive measure Cantor-like   set
	$\cC\subset \cQ_{\cS}$,  such that for all $\nu\in \cC$
	there exists a potential  $V\in  [-\nicefrac12,\nicefrac12]^\Z$ 
	and a analytic change of variables  $\Phi:{ B}_{2r}(\sob_{p})\to { B}_{4r}(\sob_{p})$  such that 
	\begin{equation}\label{pippero}
	\cT_I:=\{u\in \tw_p: |u_j|^2= I_j \quad \forall j\in Z\}
	\end{equation}
	 is an elliptic  KAM torus of frequency $\nu$ for
	$
	H_{V} \circ \Phi$ with $V_j
	=W_j$ for $j\in\cS^c$. 
	Finally $V$  depends on $\nu$
	in a Lipschitz way. 
\end{them}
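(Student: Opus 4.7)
The plan is to apply the normal form/KAM approach \emph{à la Herman} of \cite{BMP:almost} in the phase space $\sob_p$, using the potential $V$ as an infinite family of \emph{counter-terms}: since only the components $V_j$ with $j\in\cS$ are free (those with $j\in\cS^c$ being pinned to the prescribed $W_j$), each $V_j$, $j\in\cS$, is used to tune the corresponding tangential frequency $\nu_j$, while the normal frequencies $\Omega_j=j^2+W_j$, $j\in\cS^c$, are fixed up to the perturbative shift induced by the nonlinearity $P$ in \eqref{Hamilto0}.

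First I would rewrite $H_V$ in a neighbourhood of the candidate torus $\cT_I$ introducing, on the tangential sites, action--angle-like variables $(\theta,y)\in\T^\cS\times\R^\cS$ via $u_j=\sqrt{I_j+y_j}\,e^{\im\theta_j}$, $j\in\cS$, while keeping Cartesian coordinates $u_j$, $j\in\cS^c$, on the normal sites. The smallness assumption \eqref{cornettonep} together with \eqref{analitico} guarantees that the perturbation $P$ is analytic and of size $O(\e)$ in the appropriate weighted norm. The Hamiltonian is thus cast in the form $\omega\cdot y+\sum_{j\in\cS^c}\Omega_j|u_j|^2+R$ with $R=O(\e)$ plus lower-order corrections to $\omega$ and $\Omega$ that will be reabsorbed through $V$ at every step.

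Next I would run an iterative Newton-like scheme: at each step, a symplectic change of variables $\Phi_n$ generated by the Hamiltonian flow of some $S_n$ removes the resonant terms of $R$ that are linear or quadratic in $y$, $u^{(\cS^c)}$, $\bar u^{(\cS^c)}$, while the non-resonant part is transferred to the new perturbation. The generator $S_n$ is produced by solving a homological equation whose denominators have the form $k\cdot\omega+\sigma_i\Omega_i+\sigma_j\Omega_j$ with $k\in\Z^\cS$ finitely supported and $\sigma_i,\sigma_j\in\{0,\pm1\}$. This is precisely the content of Lemma \ref{mah}: provided $\omega$ satisfies the strong Diophantine condition of Definition \ref{diomichela}, tailored to the lacunary geometry of $\cS=\{2^h\}$, the solution $S_n$ is controlled in the $\sob_p$-norm with tame losses; simultaneously one corrects $V_j$, $j\in\cS$, so that the new effective tangential frequency is kept equal to the target $\nu$.

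The main obstacle, and the reason Sobolev almost-periodic solutions have been out of reach, is that the slow decay in $\sob_p$ forces very weak amplitude--frequency modulation, which in turn creates extremely small divisors: a naive Diophantine condition would not be compatible with a positive-measure set of frequencies. The key technical point is thus twofold: the sparsity of $\cS$ makes Definition \ref{diomichela} simultaneously verifiable on a Cantor-like set $\cC\subset\pan_\cS$ of positive measure, which is exactly the statement of Theorem 2 of the paper, and the dimension-free estimates of \cite{BMP:almost} allow one to iterate along $n\to\infty$ with super-exponentially small remainders despite the Sobolev topology of $\sob_p$. The composition $\Phi=\lim_{n\to\infty}\Phi_1\circ\cdots\circ\Phi_n$ is then a well-defined analytic map $B_{2r}(\sob_p)\to B_{4r}(\sob_p)$, the torus $\cT_I$ is invariant for $H_V\circ\Phi$ with frequency $\nu$, and the Lipschitz dependence of $V$ on $\nu\in\cC$ is inherited from the standard Lipschitz version of each KAM step.
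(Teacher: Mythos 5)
Your sketch correctly identifies the role of the strong Diophantine condition of Definition \ref{diomichela}, the sparsity of $\cS$, the homological-equation bound of Lemma \ref{mah} and the measure estimate, but it contains a structural error at the very first step: you introduce action--angle variables $u_j=\sqrt{I_j+y_j}\,e^{\im\theta_j}$ on the tangential sites. Here $\cS$ is \emph{infinite} and the actions decay only polynomially, $I_j\lesssim r^2\jjap{j}^{-2p}$, so this change of variables is singular at $y_j=-I_j$ with an analyticity domain in $y_j$ that shrinks like $\jjap{j}^{-2p}$, and the perturbation written in the angles $\theta\in\T^\cS$ cannot be controlled uniformly in the dimension in any analyticity strip. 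This is precisely the obstruction that forces the super-exponential decay of the radii in \cite{Poschel:2002}, and it is the reason the present paper (following \cite{Bourgain:2005} and \cite{BMP:almost}) works \emph{entirely in Cartesian coordinates}: the expansion ``centered at $I$'' is carried out via the degree decomposition \eqref{grado vero} in the quantities $|v_j|^2-I_j$, which introduces no singularity and whose norms are uniform in the dimension. With your coordinates the scheme would not close in the Sobolev setting, which is the whole point of the result.

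A second, less serious, divergence is organizational. You propose a classical KAM scheme in which $V_j$, $j\in\cS$, is re-adjusted at every step to pin the tangential frequency, and you leave vague how the drift of the normal frequencies is handled given that $V_j=W_j$ is prescribed on $\cS^c$. The paper instead proves first an abstract ``twisted conjugacy'' statement (Theorem \ref{allaMoserbis}): for \emph{every} Diophantine $\omega$ there is a unique counter-term $\Lambda(\omega)=\sum_j\lambda_j(|u_j|^2-I_j)$, Lipschitz in $\omega$ by \eqref{loredana}, such that $(\Lambda+H)\circ\Psi$ is in normal form. Only afterwards does one solve the implicit system \eqref{equaOme}: the equation on $\cS^c$ is solved for $\Omega=\Omega(\nu)$ by the contraction principle (this is where the prescribed $W$ enters and where the normal frequencies acquire their $O(\g\e)$ shift), and the equation on $\cS$ then \emph{defines} $V_j$ for $j\in\cS$. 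If you wish to keep a step-by-step frequency adjustment you must supply the analogue of this inversion for the normal components; as written, your argument does not produce the potential $V$ nor its Lipschitz dependence on $\nu$.
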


\begin{rmk}\label{sona}
Since we are able to construct solutions for every 
	$\sqrt I\in {\bar B}_{r}(\sob_{p})$  with 
	$I_j=0$ for $j\in\cS^c$  and we have that $|u_j(t)|\sim \sqrt I_j$, then 
 most of our  solutions  have only Sobolev regularity (both in space and time)
 and are not analytic, Gevrey or even $C^\infty.$ To be more explicit, the map $$\imath: \T^\infty \to \cT_I\subset \tw_p,\quad \varphi=(\varphi_j)_{j\in\Z}\mapsto (\sqrt{I_j} e^{\im \varphi_j} )_{j\in\Z}$$
 is analytic provided that we endow $\T^\infty$ with the $\ell^\infty$-topology. On the other hand, the map $\psi : \R \to \T^\infty,\, t\mapsto \o t$ is not even continuous. Of course the regularity of $\imath \circ \psi$ depends on the choice of the actions $I_j$. If we take for instance $I_j = \jap{j}^{-p}$, then $\imath \circ\psi : \R\to \tw_p$ is not continuous. On the other hand, for all $x\in\T$, the map  $$t\, \mapsto \,\imath \circ \psi(t,x) = \sum_j \jap{j}^{-p} e^{\im (\omega_j t + j x)}$$ is $C^k(\R,\C)$ for all $k < \frac{p-1}{2}$.
\end{rmk}

\begin{rmk}
The choice of $\cS$ is rather arbitrary.
However the fact that $\cS$ is sparse is essential.
More precisely we need a growth of the type
$\cS:=(s_h)_{h\in\N}$, $s_h\sim e^{\ln^3 h}.$ 
This and other extensions will be discussed in the forthcoming paper
\cite{BMP:almostsob}.
\end{rmk}

We can be rather explicit in our description of the set $\cC$. We start by 
fixing the hypercube 
\begin{equation}
\label{pan}
\pan := \set{\omega = (\omega_l)_{l}\in \R^\Z\, : \, \abs{\omega_l- l^2} < \frac12}\, , \qquad \cQ=\cQ_{\cS}\times \cQ_{\cS^c}
\end{equation}
and by introducing the following 
\begin{defn}[Diophantine condition] \label{diomichela} Let $\tau \ge 2$. We say that a vector $\omega\in\pan$ belongs to ${\mathtt D}_{\g,\cS}$ if it satisfies
	\begin{equation}\label{diofantino}
	\tag{$\mathtt{DC}$}
	|\omega\cdot \ell| \ge  \gamma \prod_{j\in \cS}\frac{1}{(1+|\ell_j|^2\jap{\log_2 j}^{2})^\tau}\,,\quad \forall  \ell : 0<|\ell|<\infty \,,\quad \sum_{j\in \cS^c}|\ell_j|\le 2 \,,\quad \pi(\ell)= \fm(\ell)=0
	\end{equation}
	where $\pi(\ell) :=\sum_{j\in \Z} j\ell_j$ and $\fm(\ell):=\sum_{j\in \Z} \ell_j$.
\end{defn}

\begin{them}\label{torobolev}
	Under the hypotheses of Theorem \ref{toro Sobolevp},
	there exist $C>1$ and a Lipschitz map 
	\begin{equation}
	\label{omegone}
	\bO: \cQ_\cS \to \cQ_{\cS^c}\,,\qquad |{\bO_j-j^2-W_j}| +\g  |\bO_j|^{\rm Lip} \le C \g \epsilon\quad\forall j\in \cS^c\,,
	\end{equation}

	such that 
	\[
	\cC:= \{ \nu\in \cQ_{\cS}: \quad (\nu,\Omega(\nu))\in {\mathtt D}_{\g,\cS} \}\,
	\]
	where $\abs{\cdot}^{\rm Lip}$ stands for the classical Lipschitz semi-norm.  Moreover $\rm{meas} (\cQ_S \setminus \cC) \le C\g$.
\end{them}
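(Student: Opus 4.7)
The plan splits the theorem into (i) existence of the Lipschitz map $\Omega$ satisfying \eqref{omegone}, and (ii) the measure estimate on $\cC$. Part (i) is a byproduct of the KAM scheme ``à la Herman'' of \cite{BMP:almost} adapted to the present setting: at step $n$ the iteration delivers normal frequencies $\Omega^{(n)}:\cQ_\cS\to\R^{\cS^c}$, Lipschitz in $\nu$, with $|\Omega^{(n)}_j-j^2-W_j|+\g|\Omega^{(n)}_j|^{\rm Lip}\le (1-2^{-n})C\g\e$; the quadratic smallness \eqref{cornettonep} makes $\{\Omega^{(n)}\}$ Cauchy in the Lipschitz seminorm, and the limit $\Omega$ satisfies \eqref{omegone}.

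For (ii) I would decompose
\[
\cQ_\cS\setminus\cC\,=\,\bigcup_{\ell}R_\ell,\qquad R_\ell\,:=\,\bigl\{\nu\in\cQ_\cS\,:\,|\nu\cdot\ell_\cS+\Omega(\nu)\cdot\ell_{\cS^c}|<\alpha(\ell)\bigr\},
\]
with $\alpha(\ell):=\g\prod_{j\in\cS}(1+|\ell_j|^2\jap{\log_2 j}^{2})^{-\tau}$, the union running over nonzero finitely supported integer vectors $\ell$ with $\pi(\ell)=\fm(\ell)=0$ and $\sum_{j\in\cS^c}|\ell_j|\le 2$. A direct case analysis of the possibilities $\ell_{\cS^c}\in\{0,\pm\delta_j,\pm 2\delta_j,\pm\delta_{j_1}\pm\delta_{j_2}\}$ against the conservation laws shows that $\ell_\cS=0$ forces $\ell=0$; hence $\ell_\cS\ne 0$ always. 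Pick any $j_0\in\cS$ with $\ell_{j_0}\ne 0$: by \eqref{omegone} and $|\ell_{\cS^c}|_1\le 2$, the difference quotient of $\nu\mapsto\nu\cdot\ell_\cS+\Omega(\nu)\cdot\ell_{\cS^c}$ in the $\nu_{j_0}$-direction is $\ge|\ell_{j_0}|-2C\e\ge 1/2$ for $\e$ small. Fubini in the $\nu_{j_0}$-slice then gives $\meas(R_\ell)\le 4\alpha(\ell)/|\ell_{j_0}|\le 4\alpha(\ell)$.

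Finally, for each $\ell_\cS$ the two conservation laws combined with $|\ell_{\cS^c}|_1\le 2$ admit at most $O(1)$ admissible $\ell_{\cS^c}$, so
\[
\meas(\cQ_\cS\setminus\cC)\,\le\,C\g\!\!\sum_{\ell_\cS\ne 0}\prod_{j\in\cS}\frac{1}{(1+|\ell_j|^2\jap{\log_2 j}^{2})^{\tau}}\,=\,C\g\Bigl(\prod_{j\in\cS}A_j-1\Bigr),
\]
with $A_j:=\sum_{m\in\Z}(1+m^2\jap{\log_2 j}^{2})^{-\tau}$. For $j=2^h$ one has $\jap{\log_2 j}\asymp h$, and $\tau\ge 2$ yields $A_{2^h}\le 1+Ch^{-2\tau}$, so $\sum_h(A_{2^h}-1)<\infty$ and the infinite product converges, giving $\meas(\cQ_\cS\setminus\cC)\le C\g$.

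The main technical obstacle is precisely the convergence of $\prod_{j\in\cS}A_j$: only the sparsity of $\cS$, encoded in the logarithmic weight $\jap{\log_2 j}$ in Definition \ref{diomichela}, renders $A_{2^h}-1$ summable. For a denser tangential lattice than $s_h\sim e^{\ln^3 h}$ (cf.\ the remark after Theorem \ref{toro Sobolevp}) this product would diverge, and the whole scheme breaks down — which is why the Sobolev regularity result hinges critically on the geometric sparsity of the tangential sites.
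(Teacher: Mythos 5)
Your part (i) is, in outline, what the paper does: $\Omega$ arises by solving the counter-term equation $\Omega_j+\lambda_j(\nu,\Omega)=j^2+W_j$ for $j\in\cS^c$ produced by Theorem \ref{allaMoserbis}, after Lipschitz-extending $\lambda$ to all of $\cQ$ and applying the contraction principle, the bound \eqref{omegone} coming from \eqref{loredana}; your description is sketchier but the mechanism is the same. Likewise your reduction to the resonant sets $R_\ell$, the observation that $\ell_{\cS}\ne 0$ (the paper's Lemma \ref{carciofo}), and the transversality estimate $\meas(R_\ell)\lesssim\alpha(\ell)$ all match the first half of the proof of Lemma \ref{misurino}.

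The gap is in your counting step. The claim that for each fixed $\ell_\cS$ the two conservation laws admit only $O(1)$ admissible $\ell_{\cS^c}$ is false: take any $\ell_\cS$ with $\fm(\ell_\cS)=0$ and $\pi(\ell_\cS)=P\ne 0$; then $\ell_{\cS^c}=e_{j_1}-e_{j_2}$ satisfies both conservation laws for \emph{every} pair $j_1,j_2\in\cS^c$ with $j_1-j_2=-P$, and there are infinitely many such pairs. Hence your sum over $\ell$ does not collapse to a sum over $\ell_\cS$ with bounded multiplicity, and the infinite-product bound does not follow. The paper closes exactly this hole with the dispersive estimate: setting $\tk(\ell):=\sum_s s^2\ell_s$, if $|\tk(\ell)|\ge|\ell|$ then $|\omega\cdot\ell|\ge|\tk(\ell)|-\tfrac12|\ell|\ge\tfrac12$ and $R_\ell=\emptyset$, so only $\ell$ with $|\tk(\ell)|<|\ell|$ contribute; for those, $(j_1,j_2)$ must in addition satisfy $\s_1j_1^2+\s_2j_2^2=-\tk(\ell_\cS)+h$ for some $|h|<|\ell|$, and the resulting linear-plus-quadratic system determines $(j_1,j_2)$ up to the choice of $h$. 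This yields $O(|\ell|)$ admissible $\ell_{\cS^c}$ per $\ell_\cS$ --- still not $O(1)$ --- and the extra factor $|k|+2$ is absorbed via $|k|+2\le 2\prod_i(1+|k_i|^2\langle i\rangle^2)^{1/2}$, which lowers the exponent in your product from $\tau$ to $\tau-\tfrac12$ and is precisely why the paper needs $\tau\ge 3/2$ for convergence. Without invoking this second-order constraint your argument cannot be completed, even though your final convergence analysis of $\prod_j A_j$ (resting on the sparsity of $\cS$) is correct as far as it goes.
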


\begin{rmk}\label{tana}
	Note that \eqref{diofantino} is a much stronger diophantine condition that the one proposed in \cite{Bourgain:2005} (or \cite{BMP:almost}), where the denominators were of the form $1+|\ell_j|^2 j^2$. Of course the reasons why we can impose such strong diophantine conditions, still 
	obtaining a positive measure set, are the structure of the set $\cS$ and the fact that we only need to consider denominators with $\sum_{j\in \cS^c} |\ell_j|\le 2$. 
	Note that  our diophantine condition becomes the usual one 
	in the case $\sum_{j\in \cS^c} |\ell_j|=0$,
	by just renaming the indices $j= 2^h$. To deal with the remaining terms ($\ell$ not  supported only on the tangential sites) we use the constants of motion and the dispersive nature of the equation ($\omega_k\sim k^2$).
\end{rmk}

 \subsection{Functional Setting}
We start by endowing our phase space $\tw_p$ (see \eqref{pecoreccio})
with the symplectic structure 
$\Omega=\im  \sum_j d u_j\wedge d\bar u_j$ induced by the Hermitian product on $\ell^2=\ell^2(\Z,\C)$.
In order to introduce the Hamiltonians on such space we introduce the following
\begin{defn}[Multi-index notation]\label{baccala}
  In the following we denote, with abuse of notation, by $\N^\Z$ the set of 
   multi-indexes $\bal,\bbt$ etc. such that
  $|\bal|:=\sum_{j\in\Z}\bal_j$ is finite.
  As usual $\bal!:=\prod_{j\in\Z,\, \bal_j\neq 0}\bal_j$.
  Moreover $\bal\preceq\bbt$ means $\bal_j\leq\bbt_j$
  for every $j\in\Z$, then $\binom{\bbt}{\bal}:=\frac{\bbt!}{\bal!(\bbt-\bal)!}.$
  We also define $u^\bal:= \prod_{j\in\Z}u_j^{\bal_j}$ (which is a finite product due to the condition $|\bal|<\infty$).
   Finally take $j_1<j_2<\ldots<j_n$
  such that $\bal_j\neq 0$ if and only if $j=j_i $ for some $1\leq i\leq n$,
  as usual we set $\partial^{\bal} f:=\partial^{\bal_{j_1}}_{u_{j_1}}
  \ldots \partial^{\bal_{j_n}}_{u_{j_n}} f
 \,;$
  analogously for $\partial_{\bar u}^\bbt f.$
  \end{defn}
  Following \cite{BMP:almost} (see Definition 2.1 with $a=s=0$) we introduce the space of regular Hamiltonians $ \cH_r(\tw_p)$.
\smallskip
\begin{defn}[Regular Hamiltonians]\label{Hr} 
	Consider a formal
	 power series expansion
	\begin{equation}\label{mergellina}
H(u)  = \sum_{(\bal,\bbt)\in \cM }H_{\bal,\bbt}u^\bal \bar u^\bbt\,,
	\qquad
	u^\bal:=\prod_{j\in\Z}u_j^{\bal_j}\,,
\end{equation}
where
\begin{equation}\label{zorro}
\cM:=\left\{
(\bal,\bbt)\in \N^\Z\times\N^\Z\,, \ \ {\rm s..t.}\ \ 
|\bal|=|\bbt|<+\infty\,,\ \ 
\sum_{j\in \Z} j (\bal_j-\bbt_j)=0
\right\}
\end{equation}
satisfying the reality condition
		\begin{equation}\label{real}
		H_{\bal,\bbt}= \overline{ H}_{\bbt,\bal}\,, \qquad
		\forall\, (\bal,\bbt)\in \cM\,.
		\end{equation}
	We denote by $\mathcal{H}_{r,p}$ for $p>1,$ $r>0$ the space of \emph{regular} Hamiltonians, i.e. those $H$ such that
		\begin{equation}\label{norma1}
 \abs{H}_{r,p}
 :=
\frac12  \sup_j  
\sum_{(\bal,\bbt)\in \cM} \abs{H_{\bal,\bbt}}\pa{ \bal_j + \bbt_j}u_p^{\bal + \bbt - 2e_j}
<\infty\,,
 \end{equation}
  where $u_p=u_p(r)$ is defined as
 \begin{equation}\label{giancarlo}
 u_{p,j}(r):= r  \jjap{j}^{-p}  \,.
 \end{equation}
	\end{defn}

\begin{rmk}
Regarding $\cM$ in \eqref{zorro} the condition 
$|\bal|=|\bbt|$, i.e. $\fm(\bal-\bbt)=0$, corresponds to mass conservation,
namely 
 the $H$ Poisson commutes with the { \sl mass} $\sum_{j\in \Z}|u_j|^2$;
 moreover
 $\sum_{j\in \Z} j (\bal_j-\bbt_j)=0$, i.e.   $\pi(\bal-\bbt)=0$, corresponds to
	 momentum conservation, 
	 namely $H$ Poisson commutes with the 
		{ \sl momentum} $\sum_{j\in \Z}j|u_j|^2$.
\end{rmk}


\noindent
	Note that 
$\abs{\cdot}_{r,p}$ is a seminorm
on  $\mathcal H_{r,p}$ and a norm on its subspace
\begin{equation}\label{lenticchia}
 \mathcal H_{r,p}^0:=
 \{\
		H\in\mathcal H_{r,p}\ \ {\rm with}\ \ 
H(0)=0\ \}\,,
\end{equation}
endowing $\mathcal H_{r,p}^0$ with a Banach space structure.\\

\medskip

\noindent
To control the Lipschitz dependence on the frequency throughout the iterative scheme, we define the following weighted Lipschitz semi-norm. Fix $\g>0$  
and assume that $H = H(\omega)\in\mathcal H_{r,p}$ for every $\omega\in\dg$.  We then define the semi-norm
(as usual $
|v|_\infty:=\sup_{j\in\Z} |v_j|
$)
\begin{eqnarray}
\norma{H}_{r,p}
&:=& 
\sup_{\omega\in\dg} \nore{H(\omega)} +\g
\sup_{\substack{\omega,\omega'\in\dg\\ \omega\neq \omega'}} 
\frac{\nore{H(\omega) - H(\omega')}}{\abs{\omega - \omega'}_{\infty}}
<\infty\,.
\label{normag}
\end{eqnarray} 
Finally we set
$$
\mathcal H_{r,p}^{\rm Lip}:=\Big\{
H(\omega)\in\mathcal H_{r,p},\ \ \omega\in\dg\,,\ \ \ {\rm with}\ \ \norma{H}_{r,p}<\infty
\Big\}\,,
\qquad
\mathcal H_{r,p}^{\rm{Lip},0}:=\left\{
H\in\mathcal H_{r,p}^{\rm Lip} 
\ \ {\rm with}\ \ 
H(0)=0
\right\}\,.
$$
It is immediate to verify that 
$\mathcal H_{r,p}^{\cO,0}$ is a Banach space
endowed with the above norm.
\begin{prop}[Monotonicity]
The norm $\|\cdot\|_{r,p}$  is monotone decreasing  in $p$ and monotone increasing in $r$: 
\begin{equation}\label{che palle}
 \|\cdot\|_{r,p+\delta}\leq \|\cdot\|_{r + \rho,p} \quad \forall\, \rho, \delta>0.
 \end{equation}
 \end{prop}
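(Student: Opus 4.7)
The plan is to reduce the proposition to a termwise inequality on the series defining $\abs{H}_{r,p}$ in \eqref{norma1}, and then handle one exceptional case by using the structure of $\cM$. Specifically, with $u_{p,i}(r)=r\jjap{i}^{-p}$ and $|\bal|=|\bbt|$ for $(\bal,\bbt)\in\cM$, I would first compute the ratio
\[
\frac{u_{p+\delta}^{\bal+\bbt-2e_j}(r)}{u_p^{\bal+\bbt-2e_j}(r+\rho)} \;=\; \pa{\frac{r}{r+\rho}}^{2|\bal|-2}\prod_{i}\jjap{i}^{-\delta(\bal_i+\bbt_i-2(e_j)_i)}\,,
\]
for every $j$ with $\bal_j+\bbt_j\geq 1$. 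Since $|\bal|\geq 1$ in any nonzero summand, the $r$-factor is $\leq 1$ and can be discarded; what remains is to bound the $\jjap{\cdot}$-product by $1$.

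Splitting at $i=j$, the product over $i\neq j$ is $\prod_{i\neq j}\jjap{i}^{-\delta(\bal_i+\bbt_i)}\leq 1$ (each $\jjap{i}\geq 2$), while the factor at $i=j$ is $\jjap{j}^{-\delta(\bal_j+\bbt_j-2)}$, itself $\leq 1$ whenever $\bal_j+\bbt_j\geq 2$. The delicate case --- and the main obstacle I expect --- is $\bal_j+\bbt_j=1$, when this last factor equals $\jjap{j}^{\delta}\geq 1$ and ostensibly pushes the ratio above $1$. This is where the structure of $\cM$ must enter: assuming $\bal_j=1,\bbt_j=0$ (the opposite case is symmetric), momentum conservation $\pi(\bal-\bbt)=0$ rearranges to $j=\sum_{i\neq j}i(\bbt_i-\bal_i)$, whence
\[|j|\;\leq\; \sum_{i\neq j}|i|(\bal_i+\bbt_i)\;\leq\; \sum_{i\neq j}\jjap{i}(\bal_i+\bbt_i)\,.\]

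To close the estimate I would combine this with the elementary bound $\sum_{k=1}^N c_k\leq \prod_{k=1}^N c_k$ valid for $c_1,\dots,c_N\geq 2$, $N\geq 1$ (a one-line induction from $(c_N-1)\prod_{k<N}c_k\geq c_N$), applied to the multiset $\{\jjap{i}\,:\; i\neq j,\text{ repeated }\bal_i+\bbt_i\text{ times}\}$, whose cardinality is $\sum_{i\neq j}(\bal_i+\bbt_i)=2|\bal|-1\geq 1$. This yields $\jjap{j}\leq \prod_{i\neq j}\jjap{i}^{\bal_i+\bbt_i}$ (the case $|j|<2$ is absorbed by $\jjap{j}=2\leq\prod$, since the product contains at least one factor $\geq 2$). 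Raising to the $\delta$-th power exactly cancels the offending $\jjap{j}^{\delta}$, and the termwise inequality follows.

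Finally, the extension to the weighted Lipschitz seminorm $\norma{\cdot}_{r,p}$ is automatic: the weights $u_p^{\bal+\bbt-2e_j}$ are $\omega$-independent, so the termwise monotonicity just established applies simultaneously to $\sup_{\omega\in\dg}\nore{H(\omega)}$ and to the Lipschitz difference quotient in \eqref{normag}, giving $\norma{H}_{r,p+\delta}\leq \norma{H}_{r+\rho,p}$.
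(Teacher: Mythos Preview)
Your proof is correct and is essentially the argument the paper defers to \cite[Proposition 6.3]{BMP:2019}: the $r$-monotonicity comes from $|\bal|=|\bbt|\geq 1$ in any contributing summand, and the $p$-monotonicity is handled termwise, with the only nontrivial case $\bal_j+\bbt_j=1$ resolved via momentum conservation and the sum-versus-product inequality for factors $\geq 2$ --- which is precisely the ``technical reason'' the paper alludes to for choosing $\jjap{j}=\max\{2,|j|\}$ rather than $\max\{1,|j|\}$. The extension to the Lipschitz seminorm is indeed immediate since the weights are $\omega$-independent.
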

The fact that this norm is increasing in $r$ follows directly from mass conservation and the fact that $H(0)=0$. Concerning the monotonicity in $p$, we refer the reader to \cite[Proposition 6.3]{BMP:2019}, where the proof is contained, written in the case of $| \cdot |_{r,p}$. The fact that it holds also in the Lipschitz frame, follows trivially. 

\smallskip

Finally, as it is expected, this norm also behaves well with respect to the Hamiltonian flows and Poisson brackets. See\footnote{Our norm corresponds to the norm $\abs{\cdot}_{r, 0 , \tw}$ in the notation of \cite{BMP:2019}, with $\tw = (\tw_j(p)) \equiv (\jjap{j}^p)$. } \cite{BMP:2019} Proposition 2.1 and Lemma 2.1.

\section{Normal forms and proof of Theorem \ref{toro Sobolevp} and \ref{torobolev}}\label{proiezioni}

The seminal idea contained in \cite{Bourgain:2005} for proving the persistence of a full dimensional invariant torus, consisted in smartly rewriting $H_{\rm V}$ in a way that one could select those terms preventing the torus to be invariant for its dynamics. This idea has been formalized in \cite{BMP:almost}, in terms of a 
 {\it degree decomposition} with increasing order of zero at $\cT_I$, defined for any regular Hamiltonian. For convenience of the reader, we sketch here the main features needed to prove our persistence theorem, while for detailed statements and proofs we address the reader to \cite[Section 4]{BMP:almost}.
\\
We want to prove that, in suitable variables,  $\cT_I$ introduced in \eqref{pippero} is an
 invariant torus on which the flow is linear with frequencies
 $\omega$. To this purpose  we introduce 
 a suitable degree decomposition, whose
main idea  is to make a \co{power series expansion centered    at $I$} without introducing a singularity in order to highlight the terms which prevent $\cT_I$ to be invariant of frequency $\omega$.
Consider a Hamiltonian $H(u)$ expanded
in Taylor series at $u=0$ and  tautologically rewrite $H$ as 
	\begin{equation}\label{vitello}
H= \sum_{\substack{m,\al,\bt\in \N^\cS\\ \al\cap \bt= \emptyset\\ a,b\in \N^{\cS^c}}} H_{m,\al,\bt,a,b}|\ub|^{2m}\ub^\al\bar \ub^\bt \zb^a\bar \zb^b
\end{equation}
	where, by slight abuse of notation\footnote{Consisting in a reordering of the indexes $j$.},  $u=(v,z)$ with $\ub= \pa{v_j}_{j\in \cS}:=\pa{u_j}_{j\in \cS}$
and 
$\zb=\pa{\zb_j}_{j\in \cS^c}:=\pa{u_j}_{j\in \cS^c}$.
\\
Then introduce the auxiliary ``action'' variables
 $w=(w_j)_{ j\in \cS}$ 
 substituting 
$|v|^{2m} v^\al \bar v^\bt z^a \bar z^b \rightsquigarrow w^m v^\al \bar v^\bt z^a\bar z^b$ in \eqref{vitello}.
Now we  Taylor expand the Hamiltonian
 with respect to $w$ and $z$ at the point
$w_j=I_j$ for $j\in \cS$ and $z=0$ respectively.

\begin{defn}[Degree decomposition] Let $I$ be fixed as in \eqref{pifferello}. For every integer $d\ge -2$ and any regular Hamiltonian $H\in\cH_{r,p}$ we define the following projection:

	\begin{equation}\label{grado vero}
	\Pi^d H := H^{(d)} := \sum_{\substack{m,\al,\bt,\delta\in \N^\cS, a,b\in\N^{\cS^c}\\\al\cap \bt= \emptyset\,,\; \delta\preceq m \\ 2|\delta|+|a|+|b|= d+2} } {H}_{m,\al,\bt,a,b}
	\binom{m}{\delta} I^{m-\delta} (|\ub|^2- I)^\delta \ub^\al \bar \ub^\bt \zb^a \bar \zb^b\,.
	\end{equation}
	where $\delta \preceq m$ means that $\delta_j \leq m_j$ for any $j\in\cS$, $|v|^2 =\pa{|v_j|^2}_{j\in\cS}$, while the multiindex notations are introduced in Definition \ref{baccala}.
\end{defn}
In this way, if $\cS=\Z$, projections coincide with the ones of Section 4 of \cite{BMP:almost}, while if $\cS= \emptyset$, $H^{(d)}$ represents the usual homogeneous degree at $\zb=0$.
\\
In this way, given $H\in\cH_{r,p}$, then 
\begin{equation}\label{semicroma}
H = H^{(\le 0)} + H^{(\ge 1)} \equiv H^{(-2)} + H^{(-1)} + H^{(0)} + H^{(\ge 1)}  
\end{equation}
 where $H^{(-2)}$ consists of terms which are constant w.r.t. both $z$ and and the "auxiliary action" $w = \abs{v}^2$, $H^{(-1)}$ is independent of the action but linear in the $z_j$, while  $H^{(0)}$ contributes with two terms: the one linear in the action and independent of $z$, the second one quadratic in $z$ and independent of the action. Finally, $H^{(\ge 1)} $ is what is left and $X_{H^{(\ge 1)}}$ vanishes on $\cT_I$. 
 \smallskip
 
 The operators $\Pi^d$ define continuous projections satisfying $\Pi^d \Pi^d = \Pi^d $ and
	$\Pi^{d'} \Pi^d=\Pi^d \Pi^{d'} =0$
	for every $d'\neq d$,  $d'\geq -2.$ Moreover, this decomposition enjoys all the crucial properties required for a KAM scheme to converge, in particular they behave well with respect to Poisson brackets, that is: 
	$$
	\forall F, G \in\cH_{r,p}\quad  \set{F, G^{\ge 1}}^{(-2)} = 0 
	$$
and
\begin{equation*}
\label{woodstok}
  \quad F^{(-2)}=0 \, \Longrightarrow \, \set{F, G^{\ge 1}}^{(-2)} = 0, \quad \text{and} \quad F^{( -1)} = 0 = F^{ (-2)} \, \Longrightarrow  \set{F, G^{\ge 1}}^{(\le 0)} = 0.
\end{equation*}
For all the properties of the projections see \cite{BMP:almost} Proposition 4.1 and 4.2.

\smallskip

\noindent

%

\noindent
\begin{defn}[Normal Forms] Let $I,r,p$ be  as in \eqref{pifferello}. Let $D: \cQ \to \R$ be the linear map defined as

 \[
 D(\o):= \sum_{j\in\Z} \omega_j |u_j|^2\,.
 \]
 We will say that   a Hamiltonian $N$ is  in normal form at  $\cT_I$ with frequency $\o$
if $N-D(\o) \in \cH_{r,p}^{{(\ge 1)}}$.
We denote the affine space of such Hamiltonians by $\cN_{r, p}(\omega;I).$ 
\end{defn}

\smallskip

\noindent
\begin{them}
	\label{allaMoserbis} 
Consider 
$r_0,p_0, \rho, r,\delta>0$ with \begin{equation}
\label{newton}
\rho\le r_0\nicefrac{}{2}\quad \mbox{ and}\quad r \le r_0\nicefrac{}{2\sqrt{2}}\,.
\end{equation}
There exists $\bar{\epsilon},\bar C>0$, depending only on
$
\rho/r_0$
and
$\delta$ 
such that the following holds. 
Let $\sqrt{I}\in {\bar B}_{r}(\tw_{p_0+\delta})$ be such that $I_j = 0,\ \forall j \notin \cS$.
Consider a family of normal forms 
  $N_0(\o;I)\in\cN_{r_0, p_0 }(\o;I)$.
  Finally consider a Lipschitz family of Hamiltonians  
$H(\o),$   
   with $\omega\in D_{\g, S}$,  assume that   $H(\o)- D(\o) \in\cH_{r_0,p_0}$ and
\begin{equation}\label{maipendanti}
(1+\Theta)^4 \epsilon \le \bar{\epsilon} \,,\qquad
\mbox{where}\qquad
\epsilon:=\g^{-1}\norma{H- N_0}_{r_0 ,p_0 }\,,
\quad \Theta = 
\g^{-1}\norma{D- N_0}_{r_0 ,p_0}\,.
\end{equation}
Then there exist a symplectic diffeomorphism $\Psi: {B}_{r_0-\rho}\pa{\tw_{p_0 + \delta}} \to {B}_{r_0}\pa{\tw_{p_0 + \delta}} $, close to the identity, a unique correction (counter term) 
$\Lambda = \sum_{j}\lambda_j \pa{\modi{j} - I_j}$,
Lipschitz depending on $\o\in\dg$, namely
\begin{equation}\label{loredana}
 |\lambda(\omega)|_\infty +\g 
 \frac{\abs{\lambda(\omega) - \lambda(\omega')}_\infty}{\abs{\omega - \omega'}_{\infty}}
 \le \bar C \g   (1+\Theta)^2\epsilon\,,\qquad
 \forall\, \omega,\omega'\in\dg,\ \  \omega\neq \omega'
\end{equation}
and a family of normal forms $N(\o;I)\in\cN_{r_0-\rho,p_0 + \delta}(\o;I)$,  such that 
\begin{equation}\label{coniugio}
\pa{\Lambda + H}\circ \Psi = N.  
\end{equation}
\end{them}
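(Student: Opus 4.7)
The proof proceeds by a standard KAM Newton--type iteration, but tailored to the degree decomposition $H = H^{(-2)} + H^{(-1)} + H^{(0)} + H^{(\ge 1)}$ recalled in \eqref{semicroma}. At step $n$, I would start with a Hamiltonian $H_n$ and a normal form $N_n\in \cN_{r_n,p_n}(\omega;I)$, and set $P_n = H_n - N_n$, of size $\epsilon_n$ in the weighted Lipschitz norm $\|\cdot\|_{r_n,p_n}$. The only obstruction to $\cT_I$ being invariant lies in the projection $P_n^{(\le 0)}$; so the goal of step $n$ is to find a generating Hamiltonian $S_n$ (with $S_n^{(\ge 1)} = 0$) and a counter-term $\Lambda_n = \sum_j \lambda_j^{(n)}(|u_j|^2-I_j)$ so that the symplectic flow $\Psi_n := \Phi^1_{S_n}$ eliminates $P_n^{(-2)}+P_n^{(-1)}$ and kills all the ``non-diagonal'' part of $P_n^{(0)}$, leaving only a new normal form correction to be absorbed into $N_{n+1}$. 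Schematically, $S_n,\Lambda_n$ are determined by the homological equation
\begin{equation*}
\{N_n,S_n\} + P_n^{(\le 0)} + \Lambda_n \;=\; \Pi_{\cN} P_n^{(\le 0)},
\end{equation*}
where $\Pi_{\cN}$ is the projection onto the normal--form part.

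\textbf{Solving the homological equation.} Expanding $S_n$ in the multi--index basis of Definition \ref{baccala} and using $N_n \approx D(\omega) + \text{(higher order)}$, the equation decouples into scalar equations of the form $\im\,(\omega\cdot\ell)\,(S_n)_{\bal,\bbt} = (P_n)_{\bal,\bbt}$ with $\ell = \bal-\bbt$ subject to $\pi(\ell)=\fm(\ell)=0$ and, crucially, $\sum_{j\in\cS^c}|\ell_j|\le 2$ (because we are in the $(\le 0)$ sector). This is exactly the class of vectors $\ell$ controlled by Definition \ref{diomichela}; inverting via the \eqref{diofantino} bound and paying the standard interpolation price in $r$ and $p$ (decreasing $r_n\to r_n-\rho_n$ and losing a $\delta_n$ in the Sobolev exponent) gives the bound $\|S_n\|_{r_n-\rho_n,p_n+\delta_n} \lesssim \gamma^{-1}\,(\rho_n/r_n)^{-\text{const}}\, \epsilon_n$, and similarly $|\lambda^{(n)}|_\infty \le C(1+\Theta_n)^2\epsilon_n$. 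Here the factor $(1+\Theta)$ enters because $\Theta_n$ measures how far the frequency-shifted normal form $N_n$ is from $D(\omega)$, and the full operator $\{N_n,\cdot\}$ is perturbed from $\{D(\omega),\cdot\}$ by a multiplicative normal--form cocycle.

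\textbf{Iteration and estimates.} Having $S_n$ and $\Lambda_n$, I set $H_{n+1} := (\Lambda_n+H_n)\circ\Psi_n$, use the property $\{F,G^{\ge 1}\}^{(\le 0)}=0$ (recalled just after the projections) to check that $P_{n+1}^{(\le 0)}$ is indeed quadratic in $P_n^{(\le 0)}$ via the Lie series expansion, and define $N_{n+1}$ as the new normal--form part of $\Lambda_n + H_n$. Choosing $\rho_n = \rho/2^n$, $\delta_n = \delta/2^n$ and the usual geometric decay of $\epsilon_n$, standard Newton estimates (as in \cite{BMP:almost}) give $\epsilon_{n+1} \le C 2^{cn}\gamma^{-1}(1+\Theta_n)^a\epsilon_n^2$, summable under the smallness condition $(1+\Theta)^4\epsilon \le \bar\epsilon$. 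The composition $\Psi = \lim_{N\to\infty}\Psi_0\circ\cdots\circ\Psi_N$ converges to a symplectic diffeomorphism ${B}_{r_0-\rho}(\tw_{p_0+\delta})\to{B}_{r_0}(\tw_{p_0+\delta})$, the counter-terms $\Lambda_n$ sum to the final $\Lambda$ with the bound \eqref{loredana}, and $N_\infty \in \cN_{r_0-\rho,p_0+\delta}(\omega;I)$ fulfils $(\Lambda+H)\circ\Psi = N$.

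\textbf{Main obstacle.} The nontrivial point is to prove that the homological equation is solvable \emph{uniformly in the dimension} at every step of the scheme: the loss of regularity $\rho_n$ must absorb the blow-up of the small-divisor constants produced by the denominators $\prod_{j\in\cS}(1+|\ell_j|^2\jap{\log_2 j}^2)^\tau$. Because these denominators use $\log_2 j$ rather than $j^2$, they are much more restrictive than those of \cite{Bourgain:2005,BMP:almost}, and establishing the corresponding tame bounds on $S_n$ in the norm $\|\cdot\|_{r,p}$ is exactly the content of Lemma \ref{mah}; once that lemma is available, propagating Lipschitz dependence on $\omega\in\dg$ through the iteration is routine, since the weighted seminorm \eqref{normag} behaves tamely under Poisson brackets and Hamiltonian flows (\cite{BMP:2019}, Prop.~2.1 and Lemma~2.1).
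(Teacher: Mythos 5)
Your overall architecture (a Newton iteration on the degree decomposition, homological equations confined to the $(\le 0)$ sector so that only vectors $\ell$ with $\sum_{j\in\cS^c}|\ell_j|\le 2$ from Definition \ref{diomichela} occur, and Lemma \ref{mah} as the engine) is the same as the paper's, but there are two genuine gaps.

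The first is the counter-term bookkeeping. You introduce a fresh correction $\Lambda_n=\sum_j\lambda_j^{(n)}(|u_j|^2-I_j)$ at every step, \emph{in the step-$n$ variables}, and assert that ``the counter-terms $\Lambda_n$ sum to the final $\Lambda$''. They do not: unwinding $H_{n+1}=(\Lambda_n+H_n)\circ\Psi_n$ yields a total correction $\sum_n\Lambda_n\circ(\Psi_n\circ\Psi_{n+1}\circ\cdots)$, a sum of action functions written in \emph{different} coordinate systems, which is not of the form $\sum_j\lambda_j(|u_j|^2-I_j)$ in the original variables --- and it must be, both for \eqref{coniugio} and for the application, where $\lambda_j$ is absorbed into the Fourier multiplier $V_j$. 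The paper instead runs a twisted-conjugacy scheme \`a la Herman: $\Lambda_0$ is a \emph{free parameter in the original variables} from the outset, the iterated Hamiltonian keeps the form $H_n=D(\o)+(\id+\call_n)\Lambda_n+G_n$ where the linear operators $\call_n$ record how the free parameter is transported by the flows, and at each step the portion $\bar\Lambda_n$ is determined by inverting $\id+M_n$, where $M_n$ collects the contributions of $\call_n$ and of the brackets $\{S_n^{(-2)}+S_n^{(-1)},G_n^{\ge1}\}$ (the homological system is triangular, not a single diagonal equation). This is exactly where the uniqueness of $\Lambda$ and the absence of small divisors in \eqref{loredana} come from; your scheme produces neither.

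The second gap is quantitative but fatal: the choice $\delta_n=\delta/2^n$ is incompatible with Lemma \ref{mah}. Each inversion costs a factor $e^{(c/\delta_n)\ln^2(1/\delta_n)}$, so a geometric $\delta_n$ gives a per-step constant of order $e^{C2^n n^2}$, which is \emph{not} dominated by the quadratic gain $\epsilon_0^{2^n}$ (the exponent $2^n n^2$ beats $2^n\ln(1/\epsilon_0)$ for any fixed $\epsilon_0$), let alone by the weaker rate $e^{-\chi^n}$, $\chi=3/2$, that the scheme actually achieves. This is precisely why the paper takes $\delta_n\sim\delta/n^2$ in \eqref{amaroni}, so that the per-step loss is $e^{Cn^3}=e^{o(\chi^n)}$, and why the threshold $\tK$ in \eqref{gianna} involves $\sup_n e^{\crac n^3}e^{-\chi^n(2-\chi)}$. (Geometric decay of $\rho_n$ is harmless, since the loss in the analyticity radius is only polynomial in $r_0/\rho_n$.)
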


\subsection{Proof of Theorem \ref{toro Sobolevp} and \ref{torobolev}}\label{auricchio} 
Theorem \ref{toro Sobolevp} follows from Theorem \ref{allaMoserbis} in a straightforward way.
One first rewrites $H_V$ in \eqref{Hamilto0} as $D + P +\Lambda$ by setting  $\lambda_j = j^2-\omega_j + V_j$ so that it fits the hypotheses with $N_0=D$, $\Theta=0$ and $\epsilon\sim \varepsilon$
(recalling \eqref{cornettonep} and taking $\e_*$ small enough).  Then Theorem \ref{allaMoserbis} gives us the desired change of variables provided that $\Lambda$ is fixed in terms of the frequency $\omega$. Now we denote $\omega_j =\nu_j$ if $j\in \cS$ and $\omega_j=\Omega_j$ otherwise.
We get the equations
\begin{equation}
\label{equaOme}
\begin{cases}
&\bO_j+ \lambda_j(\bo, \bO) = j^2+W_j \,,\quad \mbox{if }\; j\notin \cS\\
&\bo_j +\lambda_j(\bo, \bO)= j^2+V_j \,,\quad \mbox{if }\; j\in \cS\,.
\end{cases}
\end{equation}

Now we   Lipschitz extend the map $\lambda: \,\dg \to \ell_\infty$   to the whole square $\pan$ and, by
\eqref{loredana} and
 the 
 Contraction Lemma, we solve the first equation finding $\bO= \bO(\bo)$. 
Finally we solve the second equation by setting 
$
V_j=\bo_j +\lambda_j(\bo, \bO(\bo))- j^2
$
for $j\in \cS$.
This concludes the proof of Theorem \ref{toro Sobolevp} and also shows \eqref{omegone}. Finally, from Lemma \ref{misurino} below Theorem \ref{torobolev} also follows.

\begin{lemma}[Measure estimates]\label{misurino}
	 The measure of $\cQ_\cS\setminus\cC$ is of order $\g$.
\end{lemma}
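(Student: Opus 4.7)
The plan is to bound $\meas(\cQ_\cS\setminus\cC)$ by the standard union-of-resonances argument: $\cQ_\cS\setminus\cC\subset \bigcup_\ell R_\ell$ where $R_\ell:=\{\nu\in\cQ_\cS\,:\,|(\nu,\Omega(\nu))\cdot\ell|<\eta_\ell\}$ and $\eta_\ell:=\g\prod_{j\in\cS}(1+\ell_j^2\jjap{\log_2 j}^2)^{-\tau}$, with $\ell$ ranging over admissible multi-indices (i.e.\ $0<|\ell|<\infty$, $\pi(\ell)=\fm(\ell)=0$, $\sum_{k\in\cS^c}|\ell_k|\le 2$). Then I estimate $\meas(R_\ell)$ uniformly in $\ell$ and control the sum by exploiting both the sparsity of $\cS$ and the ``cross resonance'' structure coming from $\|\ell_{\cS^c}\|_1\le 2$.

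For the per-index estimate, I first observe that $\ell_\cS\ne 0$ for every admissible $\ell\ne 0$: the $\cS^c$-part alone cannot satisfy both $\pi=\fm=0$ (the only possibility would be $\sigma_1+\sigma_2=0$ with $k_1\ne k_2$, forcing $k_1=k_2$, contradiction). Picking $j_*\in\cS$ with $|\ell_{j_*}|\ge 1$ maximal, the Lipschitz bound \eqref{omegone} yields a lower slope $|\ell_{j_*}|-2C\e\ge 1/2$ for the map $\nu\mapsto (\nu,\Omega(\nu))\cdot\ell$ along $\nu_{j_*}$, whence $\meas(R_\ell)\le 4\eta_\ell$ by Fubini on the $\cQ_\cS$-hypercube.

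The main obstacle is counting admissible $\ell$ efficiently. Classify by $n:=\|\ell_{\cS^c}\|_1\in\{0,1,2\}$: for $n\le 1$ or $n=2$ with a single site carrying $\ell_k=\pm 2$, the constraints determine $\ell_{\cS^c}$ uniquely from $\ell_\cS$. The delicate case is $n=2$ with $k_1\ne k_2\in\cS^c$ and $\ell_{k_i}=\sigma_i\in\{\pm 1\}$, where $\sigma_1+\sigma_2=-\fm(\ell_\cS)$ and $\sigma_1 k_1+\sigma_2 k_2=-\pi(\ell_\cS)$ leave a one-parameter family of pairs. To avoid a divergent count I decompose
\[
(\nu,\Omega(\nu))\cdot\ell \;=\; \underbrace{\textstyle\sum_{j\in\Z} j^2\ell_j}_{N\in\Z} \;+\; \mathcal E(\nu,\ell), \qquad |\mathcal E(\nu,\ell)|\le \tfrac12|\ell_\cS|_1+1,
\]
using $|\nu_j-j^2|,\,|W_k|<1/2$ and $|\Omega_k-k^2-W_k|=O(\g\e)$. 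Thus $R_\ell\ne\emptyset$ forces the integer $N$ to satisfy $|N|\le |\ell_\cS|_1+2$; substituting $k_1^2+k_2^2=\pi(\ell_\cS)^2-2k_1 k_2$ (for $\sigma_1=\sigma_2$) or $k_1^2-k_2^2=-\pi(\ell_\cS)(k_1+k_2)$ (for $\sigma_1=-\sigma_2$, noting $|\pi(\ell_\cS)|\ge 1$ else $k_1=k_2$) converts this into a bounded condition on $k_1 k_2$ (respectively $k_1+k_2$), admitting $O(|\ell_\cS|_1)$ integer solutions, each yielding a unique pair. The total count per $\ell_\cS\ne 0$ is therefore $\le C(1+|\ell_\cS|_1)$.

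Putting this together,
\[
\meas(\cQ_\cS\setminus\cC)\;\le\;C\g\sum_{\ell_\cS\ne 0}(1+|\ell_\cS|_1)\prod_{j\in\cS}\frac{1}{(1+\ell_j^2\jjap{\log_2 j}^2)^\tau}.
\]
Distributing $|\ell_\cS|_1=\sum_{j_0\in\cS}|\ell_{j_0}|$ and factorising, the one-variable sum $\sum_{n\in\Z}|n|(1+n^2 m^2)^{-\tau}\lesssim m^{-2\tau}$ with $m=\jjap{\log_2 j_0}$, while the infinite product $\prod_{j\in\cS}(1+C\jjap{\log_2 j}^{-2\tau})$ converges because $\cS=\{2^h\}_{h\in\N}$ makes $\jjap{\log_2 j}=\max(2,h)$. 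The bound thus reduces to $C\g\sum_{h\ge 0}\max(2,h)^{-2\tau}<\infty$ for $\tau\ge 2$, yielding $\meas(\cQ_\cS\setminus\cC)=O(\g)$. Note that without the integer-part reduction, a cruder estimate like $k_1^2+k_2^2\lesssim\sum j^2|\ell_j|$ would yield a weight $j_0^2\jjap{\log_2 j_0}^{-2\tau}\sim 2^{2h_0}/h_0^{2\tau}$ per $j_0\in\cS$, which diverges; the sparsity of $\cS$ is thus leveraged exactly through the $|\ell_\cS|_1$-counting trick in the $n=2$ step.
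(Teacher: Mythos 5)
Your proposal follows essentially the same route as the paper's proof: a union bound over resonant sets, transversality in a tangential direction $j_*\in\cS$ (which exists by the same parity/momentum argument as the paper's Lemma \ref{carciofo}), reduction to the bounded integer $N=\sum_j j^2\ell_j$ (the paper's $|\tk(\ell)|<|\ell|$ restriction), and the count of $O(1+|\ell_\cS|_1)$ admissible normal pairs $(k_1,k_2)$ from the linear and quadratic momentum constraints. The only cosmetic difference is at the very end, where you close the sum by explicit factorisation over $\cS$ instead of absorbing the factor $|k|+2$ into the product and citing the standard convergence lemma; both are correct.
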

See Appendix \ref{measure} for the proof.
\section{small divisors and homological equation}

The proof of Theorem
\ref{allaMoserbis} 
is based on an iterative scheme that 
kills out the obstructing terms, namely  terms belonging to 
$ \cH_{r,p}^{(-2)},\cH_{r,p}^{(-1)}$ and $ \cH_{r,p}^{(0)}$,
by solving homological equations of the form 
\begin{equation}\label{esperidi}
L_\omega F^{(d)} = G^{(d)},\qquad G^{(d)}\in \cH_{r,p}^{(d)},\quad d= -2,-1,0.
\end{equation}
where 
\[
L_\omega[\cdot]:= \{\sum_j \omega_j|u_j|^2,\cdot \}\,,\qquad L_\omega F= \sum_{\bal,\bbt\in\N^\Z }\omega \cdot \pa{\bal-\bbt}F_{\bal,\bbt}u^\bal \bar u^\bbt\,.
\]
The convergence of the iterative KAM scheme comes from a good control of the solution $L_\omega^{-1} G^{(d)}$, which is discussed in detail in this section. On the other hand the iterative algorithm required to prove Theorem \ref{allaMoserbis} is the one of \cite{BMP:almost}. In Appendix \ref{AAA} we state the iterative Lemma \ref{iterativo}, we give a sketch of its proof for completeness and conclude by deducing Theorem \ref{allaMoserbis}.

Let us go back to the Homological equation $L_\o F = G$. As it is standard we denote by $\cH_{r,p}^\cK$ the kernel of $L_\omega$ and set
\begin{equation}
\label{ragno}
\Pi^\cK H := \sum_{\bal\in\N^\Z}H_{\bal,\bal}|u|^{2\bal}\,,\qquad \Pi^\cR H := H- \Pi^\cK H\,.
\end{equation}
Correspondingly, we define the following subspaces of $\Hrp$:
\begin{equation}\label{dolcenera}
\Hrp^\cK:=\{ H\in \Hrp\,:\quad \Pi^\cK H= H\}
\,,\qquad
\Hrp^\cR:=\{ H\in \Hrp\,:\quad \Pi^\cR H= H\}
 \,.
\end{equation}
these projections are continuous on $\Hrp$.
\\
On the subspace $\Hrp^\cR$, the Lie derivative operator $L_\omega$ is formally invertible with inverse 
\begin{equation}
\label{lala}
L_\omega^{-1} G := \sum \frac{G_{\bal,\bbt}}{\im\pa{\omega \cdot(\bal-\bbt})}\buu
\end{equation}
A good bound for the solutions of the homological equations \eqref{esperidi}
is a consequence of the following crucial 
\begin{lemma}[Straightening the tangential dynamics]\label{mah}
Let $\omega\in \tD_{\g,\cS}$ and $\omega\mapsto f(\omega)\in \cH_{r,p}^{\le 0,\cR}$ be a Lipschitz family of maps, then
there exists a constant $c>0$ such that
for all  $0<\delta<1$
\begin{equation}\label{laser}
	\|L_\omega^{-1} G\|_{r,p+\delta} 
	\le c\g^{-1} e^{\frac{c}{\delta} 
	\ln^2(\frac1\delta)}\|G\|_{r,p},
	\end{equation}
	for some suitable pure constant $c>0$.
\end{lemma}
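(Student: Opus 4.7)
The plan is to invert $L_\omega$ diagonally on Fourier monomials, use the Diophantine condition of Definition \ref{diomichela} to estimate the small divisors, and then absorb the resulting denominators into the regularity loss from $p$ to $p+\delta$. The first step is to check that on $\cH_{r,p}^{\le 0,\cR}$ the divisors $\omega\cdot\ell$, with $\ell = \bal-\bbt$, all fall in the regime of \eqref{diofantino}: mass/momentum conservation built into $\cH_{r,p}$ yields $\fm(\ell) = \pi(\ell)=0$, the $\cR$-projection gives $\ell\ne 0$, and the degree-$\le 0$ constraint $2|\delta|+|a|+|b|\le 2$ of Definition 2.1 forces $\sum_{j\in\cS^c}|\ell_j|\le |a|+|b|\le 2$. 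Hence every admissible $\ell$ satisfies
\[
|\omega\cdot\ell|\ge \g\prod_{j\in\cS}\bigl(1+|\ell_j|^2\jap{\log_2 j}^2\bigr)^{-\tau}.
\]

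Writing $L_\omega^{-1}G=\sum_{\bal,\bbt}G_{\bal,\bbt}(\im\,\omega\cdot\ell)^{-1}u^\bal\bar u^\bbt$ and comparing the definitions of $|\cdot|_{r,p+\delta}$ and $|\cdot|_{r,p}$ via $u_p(r)_k = r\jjap{k}^{-p}$, the sup-norm part of the estimate reduces to the combinatorial bound
\[
A(\ell):=\prod_{j\in\cS}\bigl(1+|\ell_j|^2\jap{\log_2 j}^2\bigr)^{\tau}\,\prod_{j\in\cS}\jjap{j}^{-\delta|\ell_j|}\le e^{\frac{c}{\delta}\ln^2\frac{1}{\delta}},
\]
having used $|\ell_k|\le\bal_k+\bbt_k$ to absorb the $\cS$-part of the overall smoothing factor $\prod_k\jjap{k}^{-\delta(\bal_k+\bbt_k)}$ (the contribution of $k\in\cS^c$ is trivially bounded by $1$).

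To prove the combinatorial bound I parametrize $\cS$ via $j=2^h$, $h\in\N$, and set $n_h=|\ell_{2^h}|$, $\tilde h=\max(1,h)$, so that $\ln\jjap{2^h}=\tilde h\ln 2$. Then
\[
\ln A(\ell) = \sum_{h:\,n_h\ne 0}\bigl[\tau\ln(1+n_h^2\jap{h}^2) - (\ln 2)\,\delta\,n_h\,\tilde h\bigr].
\]
For each $h$ separately, optimizing over $n_h\ge 1$ shows that the summand is non-positive unless $\tilde h\lesssim \tau/\delta$, in which case it is at most $2\tau\ln(\tau\jap{h}/(\delta\tilde h))+O(\tau)$. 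Summing over the $O(\delta^{-1})$ scales with a positive contribution yields a total of size $O(\delta^{-1}\ln(1/\delta))$, which is comfortably bounded by $c\delta^{-1}\ln^2(1/\delta)$. This is exactly where the sparse choice $\cS=\{2^h\}$ enters: the Diophantine loss $\jap{\log_2 j}=\jap{h}$ is negligible compared to the Sobolev smoothing $\jjap{j}^{-\delta}\sim 2^{-\delta h}$, so the per-scale balance is won by the smoothing factor as soon as $h\gtrsim \tau/\delta$.

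Finally, the $\g$-weighted Lipschitz part of $\|L_\omega^{-1}G\|_{r,p+\delta}$ follows by the same mechanism after the identity
\[
L_\omega^{-1}G(\omega)-L_{\omega'}^{-1}G(\omega')=L_\omega^{-1}\bigl(G(\omega)-G(\omega')\bigr)+\sum_{\bal,\bbt}\frac{(\omega'-\omega)\cdot\ell}{\im(\omega\cdot\ell)(\omega'\cdot\ell)}G_{\bal,\bbt}(\omega')u^\bal\bar u^\bbt,
\]
applying the Diophantine bound twice in the second sum; the extra factor $|\ell|_1\le|\bal|+|\bbt|$ is polynomial and is absorbed by slightly enlarging $c$ in the exponential. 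The only real difficulty is the optimization of the previous paragraph: everything hinges on whether the Diophantine condition of Definition \ref{diomichela}, strong though it is, remains weak enough (given $\cS=\{2^h\}$) to be beaten by a finite exponential-in-$\delta^{-1}$ regularity price.
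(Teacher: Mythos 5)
Your reduction of \eqref{laser} to the combinatorial bound on $A(\ell)$ drops the factor that contains essentially all of the difficulty. The norm \eqref{norma1} is a $\sup$ over $q$ of sums weighted by $(\bal_q+\bbt_q)\,u_p^{\bal+\bbt-2e_q}$, so passing from $p$ to $p+\delta$ produces not only the smoothing $\prod_k\jjap{k}^{-\delta(\bal_k+\bbt_k)}$ but also the \emph{growing} factor $\jjap{q}^{2\delta}$; the quantity one actually has to control is
\begin{equation*}
\sup_{q:\,\bal_q+\bbt_q\neq0}\Big(\frac{\jjap{q}^2}{\prod_k\jjap{k}^{\bal_k+\bbt_k}}\Big)^{\delta}\,\frac{\g}{|\omega\cdot(\bal-\bbt)|}\,,
\end{equation*}
which is exactly \eqref{tacchinobis}. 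Since $q$ ranges over all sites carried by the monomial --- in particular over normal sites in $\cS^c$, which are unconstrained in size, and over the largest tangential site $2^{h_\tM}$ --- the factor $\jjap{q}^{2\delta}$ eats the smoothing coming from the two largest indices of the monomial. Your $A(\ell)$, which keeps the full decay $\jjap{j}^{-\delta|\ell_j|}$ at every tangential site and dismisses $\cS^c$ as ``trivially bounded by $1$'', is therefore not the supremum you face; as written, that supremum is infinite without further input (take a single huge normal site of multiplicity one and put $q$ there: the net factor is $\jjap{q}^{\delta}$, unbounded).

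The missing ingredients are precisely what the paper's case analysis supplies. Momentum conservation gives $\na_1\le\sum_{l\ge2}\na_l$, which combined with Lemma \ref{luchino} (with $a=1/2$) converts $\jjap{q}^2/\prod_k\jjap{k}^{\bal_k+\bbt_k}$ into $2/\prod_{l\ge3}\na_l^{1/2}$: one retains only \emph{half} the smoothing, and only from the \emph{third largest} index onwards. Worse, when the largest tangential site $2^{h_\tM}$ carries multiplicity $\al_{h_\tM}+\bt_{h_\tM}=1$, it contributes $(1+h_\tM^2)^\tau$ to the small divisor but nothing to $\prod_{l\ge3}\na_l$, and one must invoke the dispersive constraint $|\sum_i(\fa_i-\fb_i)i^2|\le2\sum_j|\fa_j-\fb_j|$ via Lemma C.4 of \cite{BMP:2019} to get $2^{h_\tM}\le31\sum_{l\ge3}\na_l^2$ and absorb $h_\tM$ logarithmically into the lower sites. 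Your per-scale optimization is correct and coincides with the paper's Case 1 computation \eqref{vvv}--\eqref{bibi}, but it is the routine part; the bookkeeping of where the largest one or two indices sit, which you declare a non-issue, is where the lemma is actually proved.
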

\begin{rmk}
The proof of this Lemma is the real core of  our result. It is simple if $G$ is supported
only on $\cS$ or $\cS^c$. The crucial point is to control the interaction 
between tangential and normal sites.
The key ingredient is that we are only considering Hamiltonians that are { at most quadratic} in the normal variables, which in turn are supported on a sparse lattice (recall Remark
	\ref{tana}). 
		This result should be compared with the corresponding one in \cite{BMP:2019} Proposition 7.1
		item ($\mathtt M$). In this latter paper in order to control $L_\omega^{-1} G$ we can not take any $\delta>0$ but have to require $\delta \ge \tau_1$ instead, where $\tau_1$ is some fixed quantity. 
\end{rmk}

\begin{proof}[Proof of Lemma \ref{mah}] By \cite{BMP:2019},  Proposition $4.2$ and formulas $(4.10)-(4.11)$,
one has \begin{equation}\label{gulasch}
\|L_\omega^{-1} G\|_{r,p+\delta} 
\le \g^{-1} K_0  \|G\|_{r,p},
\end{equation}
where (see also \cite{BMP:2019} Proposition 7.1
item ($\mathtt M$))
\begin{equation}\label{tacchinobis}
	K_0 = \sup_{\substack{q\in\Z, \fa\neq \fb  \in\N^\Z \\ \fa_q + \fb_q \neq 0\, , \pi(\fa,\fb) = 0\,, \fm(\fa - \fb)=0 \\
			\sum_{j\in \cS^c}\fa_j+\fb_j\le 2\\
			\abs{\sum_i{\pa{\fa_i-\fb_i}i^2}}\le 2 \sum_j\abs{\fa_j-\fb_j}}} \pa{{\frac{\jjap{q}^2}{\Pi_j \jjap{j}^{\fa_j + \fb_j}}} }^\delta \frac{\g}{\abs{\omega\cdot\pa{\fa-\fb}}}.
	\end{equation}


	Let us set
	$$
	\al_h=\bal_{2^h}\,,\qquad
	\bt_h=\bbt_{2^h}\,,\quad h\in \N.
	$$
	By the conservation of mass and momentum and the constraints $\bal\ne \bbt$  and $\sum_{j\in \cS^c}\fa_j+\fb_j\le 2$, there exists at least one $j\in \cS$ such that $\fa_j+\fb_j\ne 0$. We denote the largest $j\in \cS$ with this property as $2^{h_\tM}$.  If $h_\tM=0$, by conservation of mass
	$|\fa_{1}-\fb_{1}|\le 2$, then  the Diophantine condition implies that $|\omega\cdot (\fa-\fb)|\ge \g/4$.
	Hence we assume that $h_\tM>0$.  
	\\
	By the conservation of mass and momentum  and by Lemma \ref{luchino}  with $a=1/2$, we can write 
	\begin{equation}
	\frac{\jjap{q}^2}{\Pi_j \jjap{j}^{\fa_j + \fb_j}} \le \frac{\na_1}{\prod_{l\ge 2} \na_l} \le \frac{\sum_{l\ge 2}\na_l}{\prod_{l\ge 2} \na_l} \le \frac{1 + \na_2^{\frac12}}{\Pi_{l\ge 2} \na_l^{\frac12}} \le \frac{2}{\Pi_{l\ge 3}  \na_l^\frac12}\,,
	\end{equation}
which we substitute in \eqref{tacchinobis} and get
	\begin{equation}\label{semper}
	K_0 \le \sup_{ \fa, \fb } \pa{\frac{2}{\Pi_{l\ge 3}  \na_l^{\frac12}}}^\delta \prod_{i\in\cS} \pa{1 + \jap{\log_2{i}}^2 \abs{\fa_i - \fb_i}^2}^\tau \g \,.
	\end{equation}
	We divide the proof in several cases depending on appropriate constraints on $\bal,\bbt$. We shall denote by $K_0^{(m)}$ the supremum  \eqref{semper} restricted to $\bal,\bbt$ in case $m$. 
	\subsection*{Case  1.   $\na_2> 2^{h_{\tM}}$} Here, since there are at most two normal sites we get\footnote{where one has $\jap{h}$ at the exponent since $\jjap{2^h} = 2^{\jap{h}}\, \forall h\in\N$ .}
	\[
	\Pi_{l\ge 3}  \na_l = \prod_{h\le h_\tm} 2^{\jap{h}(\al_h + \bt_h)}\,.
	\]
	Recalling that
	\[
	\jap{\log_2i}\abs{\fa_i - \fb_i}= { \jap{h} \abs{\al_h - \bt_h}}
	\le  \jap{h}\pa{\al_h + \bt_h}  \,,\quad \mbox{if}\quad i= 2^h
	\]
	we get
	\begin{equation}\label{vvv}
	K_0^{(1)}\le \exp{\pa{-\frac{\delta \log 2}{2} \sum_h \jap{h}(\al_h + \bt_h) + \tau \sum_h \log\pa{1 + \jap{h}^2(\al_h + \bt_h)^2}}}\,.
	\end{equation}
	
	Let $k_h := \al_h + \bt_h$ and $\bar\delta:= \frac{\delta\log 2}{2\tau}$, then 
	
	\begin{equation}
	K_0^{(1)}\le \exp{\set{ \tau\pa{\sum_h - \bar\delta \jap{h} |k_h| + \sum_h \log\pa{1 + \jap{h}^2k_h^2}}}}. 
	\end{equation}
	By Lemma \ref{pesto}, whenever $\jap{h} |k_h| \ge \frac{4}{\bar\delta} \log\frac{1}{\bar\delta}$, the exponent is negative. To get the desired bound, it remains to control
	\begin{align}\label{bibi}
	K_0^{(1)} &\le \exp{\set{ \tau\pa{\sum_{h: \jap{h} |k_h| 
					\le \frac{4}{\bar\delta} \log\frac{1}{\bar\delta}}  - \bar\delta \jap{h}  |k_h| + \log\pa{1 + \jap{h} ^2k_h^2}}}} \nonumber\\
	&\le \exp\set{\tau\sum_{h: \jap{h} |k_h| \le \frac{4}{\bar\delta}\log\frac{1}{\bar\delta}} \log(1 + \frac{16}{\bar\delta^2} \log^2(\frac{1}{\bar\delta}))}
	\le \exp\set{\tau \sum_{h: \jap{h} |k_h| \le \frac{4}{\bar\delta}\log\frac{1}{\bar\delta}} 6\log\frac{1}{\bar \delta}} \\
	&\le \exp\pa{\tau \frac{24}{\bar\delta} \log^2{\frac{1}{\bar\delta}}} = \exp\pa{\frac{C}{\delta}\log^2\frac{1}{\delta}}. \nonumber
	\end{align}
	where $C = \frac{48 \tau^2}{\log 2}. $\\
	
	\subsection*{Case 2.  $\na_1 > 2^{h_\tM} = \na_2$ and
		only one normal site}
	
	We have $\sum_{j\in \cS^c} \fa_j + \fb_j =1$ and the normal site must be $\na_1$. Moreover
	\[
	\Pi_{l\ge 3}  \na_l =  2^{h_\tM(\al_{h_\tM} + \bt_{h_\tM}-1)} \prod_{h < h_\tM} 2^{\jap{h}(\al_h + \bt_h)}\,,
	\]
	so 
	\begin{equation}
	\label{qulo}
	K^{(2)}_0\le \sup_{ \fa, \fb } \pa{\prod_{h< h_{\tM}} 2^{\jap{h}(\al_h + \bt_h)}}^{-\frac{\delta}{2}} 
	\pa{2^{{h_{\tM}}(\al_{h_{\tM}} + \bt_{h_{\tM}} -1)}}^{-\frac{\delta}{2}} \pa{1 + {h_{\tM}}^2 \abs{\al_{h_{\tM}} - \bt_{h_{\tM}}}^2}^\tau\prod_{h<h_{\tM}} \pa{1 + \jap{h}^2 \pa{\al_h + \bt_h}^2}^\tau \g^{-1} .
	\end{equation}
	\gr{(a)} If $\al_{h_{\tM}}=\bt_{h_{\tM}}$ and hence $h_\tM$ does not appear in the small divisors, then we proceed as in case 1.\\
	
	If $\al_{h_{\tM}}+\bt_{h_{\tM}}\ge 2$ then we have
	\begin{equation}\label{caffe}
	\pa{2^{{h_{\tM}}(\al_{h_{\tM}} + \bt_{h_{\tM}} -1)}}^{-\frac{\delta}{2}} \pa{1 + h_{\tM}^2 \pa{\al_{h_{\tM}} + \bt_{h_{\tM}}}^2}^\tau \le C(\delta) \lesssim {\frac{1}{\delta^2} (\ln(1/\delta))^2}.
	\end{equation}
	Indeed, letting $x= \al_{h_{\tM}} + \bt_{h_{\tM}} -1 \ge 1$, the left hand side is bounded by 
	$$ \exp\pa{-\bar\delta h_{\tM} x + \log\pa{1 + h_{\tM}^2(x + 1)^2}} \le \exp\pa{-\bar\delta h_{\tM} x + \log\pa{1 + 4 h_{\tM}^2 x^2}} \le 4 \exp{\pa{-\bar\delta h_{\tM} x + \log\pa{1 + h_{\tM}^2 x^2}} }. $$ Now, if $h_{\tM} x \ge \frac{4}{\delta}\log\delta^{-1} =: \bar{y}_{\delta}$, then the right hand side is bounded by $4$. Otherwise, the bound is given by $4(1 + \bar{y}_\delta^2)$.
	Then in this case the right hand side of \eqref{qulo} is bounded by
	\[
	C(\delta) \exp{\pa{-\frac{\delta \log 2}{2} \sum_h \jap{h}(\al_h + \bt_h) + \tau \sum_h \log\pa{1 + \jap{h}^2(\al_h + \bt_h)^2}}}
	\]
	and we proceed as in case 1.
	
	\medskip
	\noindent
	\gr{(b)} If $\al_{h_{\tM}}+\bt_{h_{\tM}}=1$, here the second factor in \eqref{qulo} is equal to $1$. Thus in order to bound the third factor (i.e.
	$(1+ h_\tm^2)$)  we need a different argument.
	By\footnote{note that since $\al_{h_{\tM}}+\bt_{h_{\tM}}=1$, then $\al_{h_{\tM}}+\bt_{h_{\tM}}= |\al_{h_{\tM}}-\bt_{h_{\tM}}|$ and then $2^{h_\tM}\le |m_1(\fa-\fb)|$, in the notations of  \cite{BMP:2019}, and we may apply the Lemma.} Lemma C.4 of \cite{BMP:2019}, we have
	\begin{equation}\label{mars}
	\na_2=2^{h_\tM} \le 31 \sum_{l\ge 3} \na_l^2 = 31 \sum_{h<h_\tM} 4^{\jap{h} }(\al_h+\beta_h)\,.
	\end{equation}
	Then, the right hand side of \eqref{qulo} is bounded by\footnote{ Since $h_\tM\ge 1$ we can bound $1+ h_\tm^2\le 2 h_\tm^2$.}
	\begin{equation}\label{franco}
	2^\tau \;\sup_{ \fa, \fb } \pa{\prod_{h< h_{\tM}} 2^{\jap{h}(\al_h + \bt_h)}}^{-\frac{\delta}{2}} 
	\pa{\log\big(31\sum_{h<h_\tM} 4^{\jap{h} }(\al_h+\beta_h)\big)}^{2\tau}\prod_{h<h_{\tM}} \pa{1 + \jap{h}^2 \abs{\al_h + \bt_h}^2}^\tau \g^{-1} .
	\end{equation}
	Now, since $4^{\jap{h}} (\al_h+\beta_h)\ge 2$ we have
	\begin{align}\label{sgamuffo}
	\begin{aligned}
	\log(\sum_{h<h_\tM} 4^{\jap{h} }(\al_h+\beta_h)) \le  \log(\prod_{h<h_\tM} 4^{\jap{h} }(\al_h+\beta_h)) &\le \sum_{h < h_\tM} \log_2 (4^{\jap{h} (\al_h + \bt_h)}) \le \\ \sum_{h < h_\tM} 2\jap{h}(\al_h + \bt_h) \le \prod_{h<h_{\tM}} \pa{1 + \jap{h}^2 (\al_h+\beta_h)^2}.
	\end{aligned}
	\end{align}
	In conclusion, \eqref{franco} is bounded by 
	\begin{equation}
	\label{marcello}
	2^{3\tau} \exp{\pa{-\frac{\delta \log 2}{2} \sum_h \jap{h}(\al_h + \bt_h) + 3\tau \sum_h \log\pa{1 + \jap{h}^2(\al_h + 
				\bt_h)^2}}} \stackrel{\eqref{bibi}}{\le} 2^{3\tau} \exp\pa{3\tau \frac{24}{\bar\delta} \log^2{\frac{1}{\bar\delta}}}\,.
	\end{equation}
		\gr{ Case 3. $\na_1 > 2^{h_\tM} = \na_2$ and two normal sites.}
Noticing that $\na_1\notin \cS$,  we denote by $j_2$ the second normal site.	Now
	\begin{equation}\label{tre cappucci}
	\Pi_{l\ge 3}  \na_l =  \jjap{j_2}2^{h_\tM(\al_{h_\tM} + \bt_{h_\tM}-1)} \prod_{h < h_\tM} 2^{\jap{h}(\al_h + \bt_h)}\,.
	\end{equation}
	The only difference is that now we need to bound
	\begin{equation}\label{marocchino}
	\sup_{ \fa, \fb } \pa{\jjap{j_2}\prod_{h< h_{\tM}} 2^{h(\al_h + \bt_h)}}^{-\frac{\delta}{2}} 
	\pa{2^{h_{\tM}(\al_{h_{\tM}} + \bt_{h_{\tM}} -1)}}^{-\frac{\delta}{2}} \pa{1 + h_{\tM}^2 \abs{\al_{h_{\tM}} - \bt_{h_{\tM}}}^2}^\tau\prod_{h<h_{\tM}} \pa{1 + h^2 \abs{\al_h - \bt_h}^2}^\tau \g \,.
	\end{equation}
	\gr{(a)} If $\al_{h_{\tM}}=\bt_{h_{\tM}}$, or if $\al_{h_{\tM}}+\bt_{h_{\tM}}\ge 2$ then we proceed as in case 2-(a), since $\jjap{j_2}^{-\frac{\delta}{2}}\le 1$ and does not bother.\\
	\gr{(b)} If $ \al_{h_{\tM}}+\bt_{h_{\tM}}=1$, then \eqref{marocchino} is bounded by
	\begin{equation}\label{con sambuca}
	2^\tau \;\sup_{ \fa, \fb } \pa{\jjap{j_2}\prod_{h< h_{\tM}} 2^{\jap{h}(\al_h + \bt_h)}}^{-\frac{\delta}{2}} 
	\pa{\log\big(31\sum_{h<h_\tM} 4^{\jap{h} }(\al_h+\beta_h) +31 \jjap{j_2}^2\big)}^{2\tau}\prod_{h<h_{\tM}} \pa{1 + \jap{h}^2 \abs{\al_h - \bt_h}^2}^\tau \g^{-1} .
	\end{equation}
	We have that\footnote{we use the fact that $\log(x + y) \le \log x + \log y$, if $x,y \ge 2$ } 
	\begin{align}\label{mosca}
	\log\big(31\sum_{h<h_\tM} 4^{\jap{h} }(\al_h+\beta_h) +31 \jjap{j_2}^2\big) &\le \log\pa{31\sum_{h<h_\tM} 4^{\jap{h} }(\al_h+\beta_h) } + \log\pa{\jjap{j_2}^2} \\ &\le C \log\jjap{j_2} + \prod_{h\le h_\tM}\pa{1 + \jap{h}^2(\al_h + \bt_h)^2} \,,
	\end{align}
	where we proceeded as in case 2-(b) like in \eqref{sgamuffo}. 
	In \eqref{marocchino} all the terms of "tangential nature", are bounded in the same way  as in \eqref{marcello}.
	Concerning the ones depending on $\jjap{j_2}$, it suffices to note that
	\begin{equation}\label{bounty}
	-\bar{\delta} \log\jjap{j_2} + \log\log\jjap{j_2} \le 0,\qquad \text{whenever}\qquad \log\jjap{j_2} \ge \frac{2}{\bar\delta}\log\frac{1}{\bar\delta},
	\end{equation}
	while 
	\begin{equation}
	\label{sneakers}
	-\bar{\delta} \log\jjap{j_2} + \log\log\jjap{j_2} \le 2\log\frac{1}{\bar\delta}\qquad \text{otherwise},
	\end{equation}
	with $\bar\delta = \frac{\delta}{4\tau C}$.
	\subsection*{Case 4. $\na_1 = 2^{h_\tM}$ and the (eventual) normal sites are $< \na_2$}
	Let us start with the case of two normal sites which we denote by $j_1\ge j_2$.
	Here the expression \eqref{tre cappucci} becomes 
	\begin{equation}\label{tre cappucci normali}
	\Pi_{l\ge 3}  \na_l =  \jjap{j_1}\jjap{j_2}2^{h_\tM(\al_{h_\tM} + \bt_{h_\tM}-1)} \prod_{h < h_\tM} 2^{\jap{h}(\al_h + \bt_h)}\,,
	\end{equation}
	which contributes with the additional factor $\jjap{j_1}^{-\frac{\delta}{2}}$ in the expression \eqref{marocchino}.\\
	\gr{(a)} If $\al_{h_{\tM}}=\bt_{h_{\tM}}$, or if $\al_{h_{\tM}}+\bt_{h_{\tM}}\ge 2$ then we are reduced to case 3-(a).\\
	\gr{(b)} If $\al_{h_{\tM}}+\bt_{h_{\tM}} = 1$ then, we proceed as in case 2-(b) and apply Lemma C.4\footnote{in the notation of \cite{BMP:2019} $\na_1 = |m_1|$}, following the same estimates as in \eqref{mosca}
	\begin{align}\label{twix}
	\begin{aligned}
	\pa{1 + h_{\tM}^2 \abs{\al_{h_{\tM}} - \bt_{h_{\tM}}}^2} = \pa{1 + h_{\tM}^2} &\le 2 \pa{c\log\big(31\sum_{h<h_\tM} 4^{\jap{h} }(\al_h+\beta_h) +31 \jjap{j_1}^2 + 31 \jjap{j_2}^2\big)} \\ 
	&\le c \log\jjap{j_1}+ \prod_{h\le h_\tM}\pa{1 + \jap{h}^2(\al_h + \bt_h)^2}
	\end{aligned}
	\end{align}
	and we conclude as in case 3-(b), with \eqref{bounty} applied twice.
	
	If there is only one normal site or if there is none, then the same arguments apply word by word with the only "advantage" that there is only one $\jjap{j_1}$ or none in \eqref{tre cappucci normali}
	\subsection*{Case 5. $\na_1= 2^{h_{\tM}}$ and only one normal site $j_1=\na_2$} Here
	\[
	\Pi_{l\ge 3}  \na_l =  2^{h_\tM(\al_{h_\tM} + \bt_{h_\tM}-1)} \prod_{h < h_\tM} 2^{\jap{h}(\al_h + \bt_h)}\,.
	\]
	\gr{(a)}  If $\al_{h_{\tM}}=\bt_{h_{\tM}}$, or if $\al_{h_{\tM}}+\bt_{h_{\tM}}\ge 2$ then we proceed as in case 2-(a)\\
	\gr{(b)} If $\al_{h_\tM} + \bt_{h_\tM} =1$, then we are like in case 2-(b) and apply Lemma C.4 to $\na_1 = m_1$ obtaining the same bound as in \eqref{mars}.
	\subsection*{Case 6. $\na_1= 2^{h_{\tM}}$, $j_1=\na_2$ and two normal sites }
	The proof follows word by word the one of Case 3.\\

\end{proof}

\appendix
\section{Iterative Lemma and Proof of Theorem \ref{allaMoserbis}}\label{AAA}
 Fix
 $r_0, \rho, \delta$ as in \eqref{newton}
  and let $\{\rho_n\}_{n\in\N}, \{\delta_n\}_{n\in\N}$ be  the  summable  sequences:
 	\begin{equation}\label{amaroni}
 	\rho_n= \frac{\rho}{{6}} 2^{-n}\,,\qquad \delta_0 = \frac{\delta}{8}, \quad \delta_n = {\frac{9\delta}{4\pi^2 n^2}}\quad \forall n\ge 1\,.
 \end{equation} 
   Let us define recursively
   \begin{eqnarray}\label{pestob}
 &&r_{n+1} = r_n - {3}\rho_n\ \to \ r_\infty:=r_0-\rho\qquad   {\rm (decr
 easing)} \nonumber\\
 &&p_{n+1} = p_n + {3}\delta_n\ \to \ s_\infty:=
 p_0+\delta\qquad  {\rm (increasing)} \nonumber \\
\end{eqnarray}
Note that for every $ r'\geq r_\infty\,, p'\leq p_\infty$
\begin{equation}\label{cumdederit}
\sqrt{I}\in {\bar B}_{r}(\tw_{p_\infty})
\qquad
\stackrel{\eqref{newton}}\Longrightarrow\qquad
\sqrt{I}\in 
{\bar B}_{\frac{r_0}{2\sqrt 2}}(\tw_{p_\infty})
\subset
{\bar B}_{\frac{r_\infty}{\sqrt 2}}(\tw_{p_\infty})
\subset {\bar B}_{\frac{r'}{\sqrt 2}}(\tw_{p'})
\end{equation}
and that the projections $\Pi^d$ are well defined  
on every space
$\cH_{r',p'}$.
{We define  $\cH_{r,p}^{0,\cK}\subset \cH_{r,p}$to  be the subspace of \emph{counter-terms}, i.e. Hamiltonians of the form
	\begin{equation}\label{silvestre}
	\Lambda = \sum_{j\in \cS} \lambda_j( |v_j|^2 -I_j)	 + \sum_{j\in\cS^c} \lambda_j |z_j|^2\,, \quad \lambda:=\pa{\lambda_j}\in \ell_\infty :\quad \norma{\Lambda}_{r,p} \equiv  \norma{\lambda}_\infty.
	\end{equation}
	This space $\cH_{r,s}^{0,\cK}$
	can be isometrically identified with $\ell^\infty,$ namely
	\begin{equation}
	\label{grevity}
	\Lambda \in\ell^\infty\ \ \mbox{and}\ \  
	\| \Lambda\|_\infty:=\|\lambda\|_\infty\,.
	\end{equation} 
	For the proof of these facts, see Lemma 4.3 of \cite{BMP:almost}.}

In conclusion (see Lemma 4.4 of \cite{BMP:almost})  for any $H\in \cH_{r',p'}$ with
$r'\geq r_\infty\,,\  p'\leq p_\infty$, one has 
\begin{equation}\label{ritornello}
\|\Pi^0 H\|_{r,p}\,,\ \| \Pi^{0,\cK} H\|_\infty \leq  3  \|H\|_{r,p}
\,, \quad
\|\Pi^{(-1)} H\|_{r,p},\|\Pi^{(-2)} H\|_{r,p}  \leq   \|H\|_{r,p}\,,\quad
\|\Pi^{\geq 2} H\|_{r,p}  \leq  5  \|H\|_{r,p}\,.
\end{equation}

Let  
\begin{equation}\label{bisanzio}
H_0 := D(\o) + G_0 + \Lambda_0\,,
\qquad
G_0\in \cH_{r_0,p_0}\,,\qquad
\Lambda_0\in\ell^\infty\,,
\end{equation}
(recall \eqref{grevity})
where 
the counterterms $\Lambda_0$ 
are free parameters. 
We define 
\begin{equation}\label{vigili}
\e_0:=\gamma^{-1}\pa{\norma{\zeroK{G_0}}_\infty + \norma{\zeroR{G_0}}_{r_0, p_0} + \norma{\due{G_0}}_{r_0, p_0} + {\norma{G_0^{(-1)}}_{r_0, p_0}}} ,  \quad \Theta_0:= \gamma^{-1}\norma{\buon{G_0}}_{r_0, p_0} +\e_0 
\end{equation}

\begin{lemma}[Iterative step]\label{iterativo}
Let\footnote{Recall also that $0<\theta<1$ was fixed one and for all.} $r, r_0,p_0, \rho, \delta$ be  as in \eqref{newton},
  $\rho_n, r_n, p_n, $  as in \eqref{amaroni}-\eqref{pestob},
  $H_0,G_0,\Lambda_0$ as in \eqref{bisanzio}
  and
$\eps_0,\Theta_0$  as in \eqref{vigili}.
Let $\sqrt{I}\in {\bar B}_{r}(\tw_{p_\infty})$.
 There exists  a constant $\frak C>1$ large enough
 such that
if 
\begin{equation}\label{gianna}
\eps_0 	\leq \pa{1 + \Theta_0}^{-5} \tK^{-2}\,,
\qquad 
\tK
:= \frak C  \pa{\frac{r_0}{\rho}}^6\sup_n 2^{6n} e^{\crac n^3} e^{-\chi^n (2-\chi) } \,,
\qquad
\crac := 
\frac{2^7\, c}{\delta^{3/2}} \,,
\end{equation}
{($c$ is defined in Lemma \ref{mah})}
then we can iteratively construct a sequence of generating functions 
$S_i = \due{S_i} + S_i^{(-1)}+ \zero{S_i}\in\cH_{r_{i}-\rho_i, p_{i+1}}$ 
and a sequence of  counterterms  $\bar\Lambda_i\in\ell^\infty$ such that the following holds, for $n\ge 0$.

$(1)$ For all $ i = 0,\ldots,  n -1 $ and any $ p'\ge p_{i+1}$ the 
time-1 hamiltonian flow
 $\Phi_{S_i}$ generated by $S_i$   satisfies
	\begin{equation}
\sup_{u\in  {\bar B}_{r_{i+1}}(\tw_{p'})} \norm{\Phi_{S_i}(u)- u}_{p'} \le \rho 2^{-2i-7} \,.\label{ln}
 \end{equation}
 Moreover
	\begin{equation}\label{ucazzo}
	\Psi_n := \Phi_{S_0}\circ\cdots \circ \Phi_{S_{n-1}} 
	\end{equation}
	is a well defined, analytic map ${\bar B}_{r_n }(\tw_{p'}) \to {\bar B}_{r_0}(\tw_{p'})$ for all $p'\ge p_n $ with the bound
	\begin{equation}
	\label{cosi}
	 \sup_{u\in {\bar B}_{r_n}(\tw_{p'})}\abs{\Psi_{n}(u) - \Psi_{n-1}(u)}  \le  \rho 2^{-2n + 2}.
	\end{equation}

	$(2)$ We set $\cL_0:=0$  and for $i=1,\dots,n$ 
	 $$ \mathcal{L}_{i} + \id := e^{\set{S_{i-1},\cdot}}\pa{\mathcal{L}_{i-1} + \id},\quad \Lambda_{i} := \Lambda_{i-1} - \bar{\Lambda}_{i-1}\,,\quad H_i= e^{\{S_{i-1},\cdot\}}H_{i-1}$$ where $\Lambda_{i-1}$ are free parameters and $\mathcal{L}_{i} : \ell^\infty\to\cH_{r_{i}, p_{i}}$ 
are linear operators.  
	 We have
	\begin{equation}
\label{cioccolato}
H_{i} = D(\o) + G_{i} + \pa{\id + \mathcal{L}_{i}}\Lambda_{i},\qquad G_{i}, \in\cH_{r_{i}, p_{i}}.
\end{equation}
 Setting for $ i = 0,\ldots, n  $
	\begin{equation}\label{xhx-i}
	 \eps_i:=\gamma^{-1}\pa{\norma{\zeroK{G_i}}_\infty + \norma{\zeroR{G_i}}_{r_i, p_i} + \norma{\due{G_i}}_{r_i, p_i} + {\norma{G_i^{(-1)}}_{r_i, p_i}}},  \quad \Theta_i:=\gamma^{-1} {\norma{G_i^{\ge 1}}_{r_i,p_i}} +\e_i \,,
	\end{equation}
we have
\begin{eqnarray}
& \e_i \leq   \e_0  e^{- \chi^{i}+1} \,, 
\qquad
\chi:=3/2\,,\qquad
\qquad  \Theta_i \leq   \Theta_0 \sum_{j=0}^i 2^{-j}\, \label{en} \\
& 
 \label{fringe}
\norma{\pa{\call_{i} -\call_{i-1}} h}_{r_i,p_i}\le \tK \eps_0 \pa{1 + \Theta_0}^2 2^{-i} \norma{h}_\infty,\qquad  \norma{\call_i h}_{r_i,p_i} \le \tK (1+  \Theta_0)^2\e_0\sum_{j=1}^i 2^{-j}\norma{h}_\infty,
\end{eqnarray}
for all $h\in\ell^\infty.$
Finally the counter-terms satisfy the bound
 \begin{equation}
 \norma{\bar{\Lambda}_{i-1}}_\infty \le  \g \tK \e_{i-1}(1+\Theta_0)^{{2}}\,,\quad i=1,\dots,n \,. \label{lambdonebarra iterstima}
	\end{equation}
\end{lemma}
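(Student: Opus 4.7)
The plan is standard KAM induction on $n$. The base case $n=0$ is vacuous ($\cL_0=0$ and nothing is to be constructed). For the inductive step, assume everything is in place through step $n$; we produce $S_n$, $\bar\Lambda_n$ and verify the statements up to step $n+1$. Decompose $G_n=\due{G_n}+G_n^{(-1)}+\zeroK{G_n}+\zeroR{G_n}+G_n^{(\ge 1)}$ using the projections of Section \ref{proiezioni}. The obstruction to $H_n$ being in normal form is the sum of the first four pieces; we kill $G_n^{(-1)}$ and $\zeroR{G_n}$ by a generating function and absorb $\zeroK{G_n}$ into the counter-term by setting
\[
S_n:=L_\omega^{-1}\bigl(G_n^{(-1)}+\zeroR{G_n}\bigr)\,,\qquad \bar\Lambda_n:=\zeroK{G_n}\,,
\]
and then defining $\Lambda_{n+1}:=\Lambda_n-\bar\Lambda_n$, $\cL_{n+1}+\id:=e^{\{S_n,\cdot\}}(\cL_n+\id)$, $H_{n+1}:=e^{\{S_n,\cdot\}}H_n$. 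The scalar piece $\due{G_n}$ does not affect the flow and is simply carried along.

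The key quantitative input is Lemma \ref{mah} applied with $\delta=\delta_n\sim n^{-2}$, which gives
\[
\g\|S_n\|_{r_n,p_{n+1}}\le c\,e^{(c/\delta_n)\ln^2(1/\delta_n)}\bigl(\|G_n^{(-1)}\|_{r_n,p_n}+\|\zeroR{G_n}\|_{r_n,p_n}\bigr)\le e^{\crac n^3}\g\,\eps_n\,,
\]
while $\|\bar\Lambda_n\|_\infty\le 3\g\,\eps_n$ by \eqref{ritornello}. Expanding $H_{n+1}=e^{\{S_n,\cdot\}}H_n$ in a Lie series and using $\{S_n,D(\omega)\}=-L_\omega S_n=-(G_n^{(-1)}+\zeroR{G_n})$, the first-order terms cancel the obstructing components of $G_n$ (except for $\zeroK{G_n}$, reassigned to the counter-term). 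What remains to be estimated is
\[
\due{G_n}+G_n^{(\ge 1)}+\{S_n,G_n+(\id+\cL_n)\Lambda_n\}+\tfrac12\{S_n,\{S_n,D(\omega)+G_n\}\}+\cdots\,,
\]
each term of which is at least quadratic in $\|S_n\|$ or of the form $\|S_n\|\cdot\|G_n^{(\ge 1)}\|$. Applying the Poisson bracket estimate of \cite{BMP:2019} with analyticity loss $\rho_n\sim 2^{-n}$ and invoking the algebraic identity $\{F^{(\le 0)},G^{(\ge 1)}\}^{(\le 0)}=0$ cited after \eqref{semicroma}, no new obstructing contributions of degree $\le 0$ are regenerated, and one obtains a one-step bound of the form $\eps_{n+1}\le \tK(1+\Theta_0)^2\eps_n^{\chi}$ with $\chi=3/2$. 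Under the smallness hypothesis \eqref{gianna}, this iterates to the super-geometric decay \eqref{en}. The bounds \eqref{fringe} and \eqref{lambdonebarra iterstima} on the linear operator $\cL_n$ and on the counter-terms follow by telescoping the Lie-series identity for $\cL_{n+1}+\id$, and \eqref{ln}, \eqref{cosi} are direct Cauchy-type vector-field estimates applied to $S_n$.

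The main obstacle is the quantitative balance. Lemma \ref{mah} costs a factor $e^{(c/\delta_n)\ln^2(1/\delta_n)}\sim e^{Cn^2\ln^2 n}\le e^{\crac n^3}$ per step, which forbids a purely quadratic scheme ($\chi=2$). The remedy is precisely the choice $\chi=3/2$, leaving a margin $2-\chi=1/2$ in the super-geometric decay: this is exactly what renders $\sup_n 2^{6n}e^{\crac n^3}e^{-\chi^n(2-\chi)}=\tK$ finite, allowing the accumulated loss across all iterations to be absorbed into the single smallness condition $(1+\Theta_0)^5\tK^2\eps_0\le 1$. The algebraic cancellation $\{F^{(\le 0)},G^{(\ge 1)}\}^{(\le 0)}=0$ is indispensable, for without it the higher-order Lie brackets would repopulate the degree-$\le 0$ sectors at each step and prevent the super-geometric convergence of $\eps_n$.
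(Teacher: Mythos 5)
There is a genuine gap in the construction of the step: your choice of $S_n$ and $\bar\Lambda_n$ does not actually normalize $H_n$. First, $S_n:=L_\omega^{-1}\bigl(G_n^{(-1)}+\zeroR{G_n}\bigr)$ omits the degree $(-2)$ component, and your claim that $\due{G_n}$ ``does not affect the flow and is simply carried along'' is false: by the degree decomposition \eqref{grado vero}, $\due{G_n}$ contains the angle-dependent monomials $I^{m}v^{\al}\bar v^{\bt}$ with $\al\neq\bt$, whose Hamiltonian vector field does not vanish on $\cT_I$ and therefore obstructs invariance. Only the kernel part $G_n^{(-2,\cK)}$ is a genuine constant that may be carried along; the part $\Pi^{\cR}\due{G_n}$ must be removed by a component $\due{S_n}$ of the generating function (indeed the lemma itself asserts $S_i=\due{S_i}+S_i^{(-1)}+\zero{S_i}$). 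The paper accordingly solves the homological equation $\Pi^{\le 0}\bigl(\{S_n,D_\omega+G_n^{\ge1}\}+(\id+\call_n)\bar\Lambda_n+G_n\bigr)=G_n^{(-2,\cK)}$ as a triangular system in $\due{S_n},S_n^{(-1)},\bar\Lambda_n,S_n^{(0,\cR)}$.

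Second, the appeal to the identity $\{F,G^{\ge1}\}^{(\le0)}=0$ is misapplied: that identity requires $F^{(-1)}=F^{(-2)}=0$, whereas your $S_n$ has a nontrivial degree $-1$ (and, once corrected, degree $-2$) part. Consequently the bracket $\{S_n,G_n^{\ge1}\}$ \emph{does} regenerate terms of degree $\le 0$, of size $\sim\g\,\eps_n(1+\Theta_0)$ --- linear in $\eps_n$, not quadratic --- and with your choice $\bar\Lambda_n=\zeroK{G_n}$ the surviving contribution $\Pi^{0,\cK}\{S_n^{(-2)}+S_n^{(-1)},G_n^{\ge1}\}+\Pi^{0,\cK}\call_n\bar\Lambda_n$ would force $\eps_{n+1}\gtrsim\eps_n$ and the scheme would stall. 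This is precisely why the counter-term cannot be read off directly from $G_n$: it must absorb these coupling terms, and is determined by inverting $\id+M_n$ on $\ell^\infty$ (a Neumann series, using $\|M_n\|\le1/2$), yielding \eqref{lambdonebarra iterstima}. Your discussion of the quantitative balance (the loss $e^{\crac n^3}$ per step versus the margin $\chi=3/2<2$ in the super-geometric decay, absorbed into $\tK$) is consistent with the paper, but it only becomes operative once the new error term $G_{n+1,*}$ is genuinely quadratic in $\eps_n$, which requires the full triangular solution described above.
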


\begin{proof}[Proof of Theorem \ref{allaMoserbis}] Starting from the Hamiltonian $H$ satisfying \eqref{maipendanti}, we set $G_0 = H - D(\o)$ in \eqref{bisanzio}. The smallness conditions \eqref{gianna} are met, provided that we choose $\bar\epsilon$ and $\bar C$ appropriately. \\
Using \eqref{cosi} we define $\Psi$ as the limit of the $\Psi_n$ (which define a Cauchy sequence) and $\Lambda = \Lambda_0 = \sum_j \bar\Lambda_j < \infty$. Note that the series is summable by \eqref{lambdonebarra iterstima}. For more details see \cite[Section $6$]{BMP:almost}.
\end{proof}

\begin{proof}[Proof of the iterative Lemma]
We start with a Hamiltonian
	$H_0= D_\betta + \Lambda_0 +G_0$
with $\Lambda_0\in \ell^\infty$. 

At the $n$'th step we have an expression of the form
\[
H_n=D_{\betta} +\pa{\id+\call_n}\Lambda_n+G_n
\]
with $G_n\in \cH_{r_n,p_n}$,
\\
To proceed to the step $n+1$ we apply the change of variables $e^{\{S_n,\cdot\}}$. The generating function $S_n$ and the counterterm $\bar\Lambda_n$ are fixed as the unique solutions of the Homological equation
\begin{equation}\label{homo sapiens}
\Pi^{\le 0}  \pa{\set{S_n,D_{\betta}+G_n^{\ge 1}} +\pa{\id+\call_n}\bar\Lambda_n+G_n }= G_{n}^{(-2, \cK)}\,.
\end{equation}
This equation can be written componentwise as a triangular system and solved consequently. We have
\begin{align}
	& {\set{S_n^{(-2)},D_{\betta}} +\Pi^{-2,\cR}\call_n\bar\Lambda_n+G^{(-2,\cR)}_n }=0\\
	&\set{S_n^{(-1)},D_{\betta}}+\Pi^{-1}\set{S_n^{(-2)},G_n^{\ge 1}} +\Pi^{-1}\call_n\bar\Lambda_n+G^{(-1)}_n =0\\
	\label{counter}
	&\Pi^{0,\cK}\set{S_n^{(-2)}+ S_n^{(-1)},G_n^{\ge 1}} +\bar{\Lambda}_n+\Pi^{0,\cK}\call_n\bar\Lambda_n+G^{(0,\cK)}_n=0\\
	&\set{S_n^{(0,\cR)},D_{\betta}}+\Pi^{0,\cR}\set{S_n^{(-2)}+ S_n^{(-1)},G_n^{\ge 1}} +\Pi^{0,\cR}\call_n\bar\Lambda_n+G^{(0,\cR)}_n = 0\,.
\end{align}
We start by solving the equations for $S_n$ it "modulo $\bar\Lambda_n$", then we determine the counter-term by inversion of an appropriate linear operator resulting from inserting the equations for $S_n$ into equation \eqref{counter}. \\
We hence have
\begin{align}
	& S_n^{(-2)}= L_{\betta}^{-1} \pa{\Pi^{-2}\call_n\bar\Lambda_n+G^{(-2)}_n }\label{ostrica}\\
	&S_n^{(-1)}= L_{\betta}^{-1}\pa{\Pi^{-1}\set{L_{\betta}^{-1} \pa{\Pi^{-2}\call_n\bar\Lambda_n+G^{(-2)}_n },G_n^{\ge 1}} +\Pi^{-1}\call_n\bar\Lambda_n+G^{(-1)}_n}\nonumber \\
	& S_n^{(0,\cR)} = L_{\betta}^{-1} \pa{\Pi^{0,\cR}\set{S_n^{(-2)}+ S_n^{(-1)},G_n^{\ge 1}} +\Pi^{0,\cR}\call_n\bar\Lambda_n+G^{(0,\cR)}_n }\,.\nonumber
\end{align}
Plugging them into \eqref{counter} we thus get
\begin{align*}
	&\Pi^{0,\cK}\set{L_{\betta}^{-1} \pa{\Pi^{-2}\call_n\bar\Lambda_n  +\Pi^{-1}\set{L_{\betta}^{-1} {\Pi^{-2}\call_n\bar\Lambda_n },G_n^{\ge 1}}},G_n^{\ge 1}} +\bar{\Lambda}_n+\Pi^{0,\cK}\call_n\bar\Lambda_n=\\
	&\Pi^{0,\cK}\set{L_{\betta}^{-1} \pa{G^{(-2)}_n  +\Pi^{-1}\set{L_{\betta}^{-1} {G^{(-2)}_n },G_n^{\ge 1}}},G_n^{\ge 1}}\,.
\end{align*}
Since $M_n: \ell^\infty\to \ell^\infty$ defined as
\[
M_n h = \Pi^{0,\cK}\set{L_{\betta}^{-1} \pa{\Pi^{-2}\call_n h  +\Pi^{-1}\set{L_{\betta}^{-1} {\Pi^{-2}\call_n h },G_n^{\ge 1}}},G_n^{\ge 1}} +\Pi^{0,\cK}\call_n h
\]
satisfies 
$
\norma{M_n h}_{\infty} \le \norma{ h}_{\infty}/2
$,
then 
 \[
\bar{\Lambda}_n =(\id +M_n)^{-1}(\Pi^{0,\cK}\set{L_{\betta}^{-1} \pa{G^{(-2)}_n  +\Pi^{-1}\set{L_{\betta}^{-1} {G^{(-2)}_n },G_n^{\ge 1}}},G_n^{\ge 1}})
\]
 is determined and satisfies\footnote{The crucial point is that no small divisor appears in this estimate. This is due to the property $\norma{\Lambda}_{r,p} \equiv  \norma{\lambda}_\infty$ for all $r,p$. The detailed computation can be found in formula (6.32) of \cite{BMP:almost}}
 
 \begin{equation}
 \label{lambdozzo}
 \norm{\bar\Lambda_n}_{\infty} \lesssim \g  (1 + \Theta_0)^2\eps_n.
 \end{equation}
 
By substituting  in the equations \eqref{ostrica}, we get the final expressions for $S^{(-2)}_{n}$ and $S^{(-1)}_n$ and finally $S^{(0,\cR)}_n$ which yields the estimates

\begin{align}
\norm{\due{S_n}}_{r_n, p_n + \delta_n} & \lesssim  D_n \eps_n\\ 
\norm{S_n^{(-1)}}_{r_n - \rho_n, p_n + 2\delta_n}  &\lesssim  \pa{\frac{r_0}{\rho_n}} D_n^2 \eps_n (1 + \Theta_0)\\
\norm{S_n^{(0)}}_{r_n - 2\rho_n, p_n + 3\delta_n} & \lesssim  \pa{\frac{r_0}{\rho_n}}^2 D^3_n (1 + \Theta_0)^2 \eps_n,
\end{align}
 where we defined $$D_n :=  e^{\frac{c}{\delta_n} \ln^2(\frac1{\delta_n})} $$ as the constant coming from the homological equation and systematically used the inductive hypothesis and the first bound  in \eqref{gianna}.
 The final bound thus reads
 
 \begin{equation}
 \label{pasta frolla}
 \norm{S_n}_{r_n - 2\rho_n, p_n + 3\delta_n} \lesssim   \pa{\frac{r_0}{\rho_n}}^2  e^{{\frac{3c}{\delta_n}} \ln^2(\frac1{\delta_n})}\eps_n (1 + \Theta_0)^2.
 \end{equation}
 Then following word by word the corresponding computation in the proof of \cite[Lemma 6.1]{BMP:almost} we prove item (1) of Lemma \ref{iterativo}.\\ Regarding item (2), by construction we have
 $$
 \call_{n+1} - \call_{n} = \pa{e^\set{S_n,\cdot} - \id} \circ (\call_n + \id)
 $$
 hence 
 $$
 \norma{ (\call_{n+1} - \call_{n})h }_{r_{n+1}, p_{n+1}} \le \pa{\frac{r_0}{\rho_n}}^3  e^{{\frac{3c}{\delta_n}} \ln^2(\frac1{\delta_n})}\eps_n (1 + \Theta_0)^2 \norma{h}_\infty,
 $$
 which a fortiori proves \eqref{fringe}.\\
As for the expression of $G_{n+1}$, by definition we have 
\[G_{n+1} = e^{\set{S_n,\cdot}} H_n - \sq{D(\o) + \pa{\id + \call_{n+1}}\Lambda_{n+1}}\,.
\]
Since $S_n$ solves the Homological equation \eqref{homo sapiens}, 
we have that
\begin{align}\label{schifo al cazzo}
G_{n+1} &= \dueK{G_n} + G_n^{\ge 1}+\Pi^{\ge 1}\pa{\call_{n+1} \bar{\Lambda}_n+\set{S_n,G_n^{\ge 1}}}  + G_{n+1,*}
	\\ G_{n+1,*} &=  \{S_n,G_n^{\le 0}\} + \Pi^{\le 0}\pa{\call_{n+1}-\call_n}\bar{\Lambda}_n+
	\pa{e^{\{S_n,\cdot\}} - \id - \set{S_n,\cdot}}
	G_n  \nonumber \\
	& - \sum_{h=2}^\infty \frac{ (\ad S_n)^{h-1}}{h!} \pa{\Pi^{\le 0}\pa{\id + \call_n}\bar\Lambda_n + G_n^{\le 0} +\Pi^{\le 0} \{S_n^{(-1)} + \due{S_n},G_n^{\ge 1}\} - G_n^{(-2,\cK)}} \nonumber.
\end{align}
Note  that $G_{n+1,*}$ is  quadratic in $S_n \sim G_n^{\le 0}$.\\
We finally have
\begin{equation}\label{stellina}
\g^{-1}\norma{G_{n+1,*}}_{r_{n+1}, p_{n+1}} \lesssim \tK 2^{6n} e^{\crac n^3} \pa{1 + \Theta_0}^5 \eps_n^2 \le \tK^2(1 + \Theta_0)^5\eps_0^2 \, e^{-\chi^{n+1}} \stackrel{\eqref{gianna}}{\le} \eps_0 e^{-\chi^{n+1}},
\end{equation}
by the definition of $\tK$ and $\delta_n$ (of course the constant $\crac$ is such that  $ {{\frac{6c}{\delta_n}}  \ln^2(\frac1{\delta_n})} \le \crac n^3,\, \forall n\ge 0$). This implies the first estimates in \eqref{en}.\\
 Similar calculations apply to those terms in \eqref{schifo al cazzo} of degree $\ge 1$.
More precisely we obtain 
$$
\Theta_{n+1} \le \Theta_n(1 + \pa{\frac{r_0}{\rho_n}}^2  e^{{\frac{3c}{\delta_n}} \ln^2(\frac1{\delta_n})}\eps_n (1 + \Theta_0)^2 )
$$
which implies the second estimates in \eqref{en} by the definition of $\tK$ and the smallness condition \eqref{gianna}.
\end{proof}
\section{Measure estimates}\label{measure}
Let us show that $\cC$ has a positive measure in $\cQ_{\cS}$.
We start with the following
\begin{lemma}\label{carciofo}
	If $\ell\in\Z^\Z$ satisfies $\fm(\ell)=\pi(\ell)=0,$ then
	$|\ell|$ is even and $|\ell|\neq 2.$
\end{lemma}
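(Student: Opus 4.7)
The plan is to prove the two claims separately, both by elementary arithmetic arguments on the multi-index $\ell$. Recall $|\ell|=\sum_{j\in\Z}|\ell_j|$, while $\fm(\ell)=\sum_j \ell_j$ and $\pi(\ell)=\sum_j j\ell_j$.

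First I would handle the parity claim. The key observation is that for each $j$, the integer $|\ell_j|-\ell_j$ equals $2\max\{0,-\ell_j\}$, which is even. Summing over $j$ (a finite sum, since $|\ell|<\infty$) gives $|\ell|-\fm(\ell)\in 2\Z$. Since $\fm(\ell)=0$ by hypothesis, $|\ell|$ is even.

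Next I would rule out $|\ell|=2$ by a short case analysis on the support of $\ell$. Since the $|\ell_j|$ are non-negative integers summing to $2$, either (a) there is a single index $j_0$ with $|\ell_{j_0}|=2$ (and $\ell_j=0$ otherwise), or (b) there are exactly two distinct indices $j_1\ne j_2$ with $|\ell_{j_1}|=|\ell_{j_2}|=1$. In case (a), $\fm(\ell)=\ell_{j_0}=\pm 2\ne 0$, contradicting $\fm(\ell)=0$. In case (b), $\fm(\ell)=\ell_{j_1}+\ell_{j_2}\in\{-2,0,2\}$; for this to vanish one needs $\ell_{j_1}=-\ell_{j_2}=\pm 1$, and then $\pi(\ell)=\pm(j_1-j_2)$, which vanishes only if $j_1=j_2$, contradicting the assumption $j_1\ne j_2$. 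So $|\ell|\ne 2$.

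There is really no hard step here: the only subtlety is noticing that $\fm(\ell)=0$ alone already forces the parity, while the exclusion of $|\ell|=2$ requires both conservation laws $\fm=0$ and $\pi=0$ together (each alone leaves cases open). If anything warrants care, it is just to keep track of signs in case (b) and to exhaust both sub-cases of case (a).
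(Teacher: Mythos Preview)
Your proof is correct and follows essentially the same route as the paper's. The paper phrases the parity step via the decomposition $\ell=\ell^+-\ell^-$ (so $\fm(\ell)=0$ gives $|\ell^+|=|\ell^-|$), and then for $|\ell|=2$ reads off $\ell^+=e_i$, $\ell^-=e_j$ with $i\neq j$, whence $\pi(\ell)=i-j\neq 0$; your argument via $|\ell_j|-\ell_j\in 2\Z$ and the explicit two-case split is an equivalent repackaging of the same idea.
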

\begin{proof}
	As usual we write in a unique way $\ell=\ell^+ -\ell^-$ where
	$\ell^\pm_s\geq 0$  and $\ell^+_s\ell^-_s= 0$ for every $s\in\Z.$
	Since  $\fm(\ell)=0$ we get $|\ell^+|= |\ell^-|$, therefore
	$|\ell|=|\ell^+|+ |\ell^-|$ is even.\\
	Now assume by contradiction that $|\ell^+|= |\ell^-|=1.$ Then
	$\ell^+=e_i,\ell^-=e_j$ for some $i\neq j$ and $\pi(\ell)=i-j,$
	which contradicts $\pi(\ell)=0.$
\end{proof}
\begin{proof}[Poof of Lemma \ref{misurino}] 
	Take $\gamma\leq 1/2.$
For $\ell\ne 0$ with $|\ell|<\infty$, $\sum_{j\in \cS^c} |\ell_j|\le 2$, $\pi(\ell)=0$ and $\fm(\ell)=0$
we define the 
resonant set 
\[
\cR_\ell:=\set{\bo\in \cQ_{\cS}:	|\betta(\bo)\cdot \ell|\le \gamma \prod_{j\in \cS}\frac{1}{(1+|\ell_j|^2\jap{\log_2 j}^{2})^\tau}}
\]
	Set $\tk(\ell):= \sum_{s\in \Z} s^2 \ell_s.$
	If $|\tk(\ell)|\geq |\ell|$ 
	then 
	\[
	|\omega\cdot \ell|\ge | \sum_{s\in \Z} s^2 \ell_s|  - \frac12|\ell|\ge\frac12 |\ell|\ge \frac12
	\]
	and 	 $\cR_{\ell}=\emptyset.$ 
	Then
	\[
	\meas(\cQ_\cS\setminus\cC) \le \sum_{\substack{\ell: 0<|\ell|<\infty,  \sum_{j\in \cS^c}  |\ell_j|\le 2 \\ \pi(\ell)=\fm(\ell)= 0 \\ |\tk(\ell)| <|\ell|} } \meas(\mathcal R_\ell)\,.
	\]
	We note that there exists $\bar s\in\cS$ such that
	$\ell_{\bar s}\neq 0,$ 
	otherwise $0<|\ell|=\sum_{s\in \cS^c}|\ell_s|\le 2$, which contradicts Lemma
	\ref{carciofo}.
Recalling $\omega= (\nu,\Omega(\nu))$, we get
	\[
	 t^{-1}|(\omega(\nu)\cdot\ell)- (\omega(\nu + t e_{\bar s})\cdot\ell) |\ge |\ell_{\bar s}|-2\sup_{j\in \cS^c}|\Omega_j|^{\rm lip}\ge 1-O(\epsilon)\geq 1/2
	\]
	and 
	$$
	\meas(\mathcal R_\ell) \le 
	\gamma \prod_{s\in \cS}\frac{1}{(1+|\ell_s|^2 \langle\log_2s\rangle^2)^\tau}
	=
	\gamma \prod_{i\in\N}\frac{1}{(1+|\ell_{2^i}|^2 \langle i\rangle^2)^\tau}
	\,.
	$$
	Therefore 
	\begin{equation}\label{istanbul}
	\meas(\cQ_\cS\setminus \cC) \le \sum_{\ell\in A}
	\gamma \prod_{i\in\N}\frac{1}{(1+|\ell_{2^i}|^2 \langle i\rangle^2)^\tau}
	\,,
	\end{equation}
	where
	$$
	A:=\left\{ \ell\in\Z^\Z\ :\   0<|\ell|<\infty,\   \sum_{j\in \cS^c}  |\ell_j|\le 2,\ 
	\fm(\ell)=\pi(\ell)= 0, \  |\tk(\ell)| <|\ell|
	\right\}\,.
	$$
	Given $k\in\Z^\N$ with $|k|<\infty$ we define $\ell^k\in\Z^\Z$
	supported on $\cS$ setting
	$$
	\ell_{s}:=k_{\log_2s}\,,\ \ {\rm for}\ \ s\in\cS\,,\ \  \ell_s=0\ \ {\rm for}\ \ s\notin\cS\,.
	$$
	Now each $\ell\in A$ there exists unique $k\in\Z^\N$ with $|k|<\infty$
	and
	$j_1,j_2\in \cS^c$ and $\s_1,\s_2=\pm 1,0$ such that
	\begin{equation}\label{apapaia}
	\ell = \ell^k+ \s_1 \be_{j_1} + \s_2 \be_{j_2}\,.
	\end{equation}
	On the other hand, given $k\in\Z^\N$ with $|k|<\infty$, 
	there exist at most $36(|k|+2)$ vectors $\ell\in A$ satisfying \eqref{apapaia}.
	Indeed we prove that, given  $\s_1,\s_2=\pm 1,0$, there exist at 
	most\footnote{Note that there are 9 possible choices of $\s_1,\s_2=\pm 1,0$.}
	$4(|k|+2)$ couples
	$(j_1,j_2)\in\cS^c\times\cS^c$ such that $\ell$ in \eqref{apapaia} 
	belongs to $A$.
	Indeed they have to satisfy
	\[
	\begin{cases}
	\s_1j_1+\s_2j_2=-\pi(\ell^k) \\ 
	\s_1j_1^2+\s_2j^2_2 = - \tk(\ell^k) + h\,, \qquad {\rm for \ some\ \ }|h|<|\ell| \leq |k|+2\,.
	\end{cases}	
	\]
	Then by \eqref{istanbul} we get
	$$
	\meas(\cQ_\cS\setminus\cC) \le 36\g	 \sum_{k\in \Z^\N:  0<|k|<\infty} 
	(|k|+2) \prod_{i\in \Z}\frac{1}{(1+|k_i|^2 \langle i\rangle^2)^\tau} \,.
	$$
	Since
	\[
	|k|+2 \le  2\prod_{i\in \Z}(1+|k_i|^2 \langle i\rangle^2)^{1/2}
	\]
	we get
	$$
	\meas(\cQ_\cS\setminus\cC) \le 72\g	 \sum_{k\in \Z^\N:  0<|k|<\infty} 
	\prod_{i\in \Z}\frac{1}{(1+|k_i|^2 \langle i\rangle^2)^{\tau-1/2} }\,,
	$$
	where the last sum converges (see \cite{Bourgain:2005} or Lemma 4.1 of \cite{BMP:2019}) provided that $\tau\ge 3/2$.
\end{proof}
\section{Technicalities}
\begin{lemma}[Lemma C.2 of \cite{BMP:2019}]\label{luchino}
	Let $0<a<1$ and $x_1\geq x_2\geq \ldots\geq x_N\geq 2.$ Then
	$$
	\frac{\sum_{1\leq\ell\leq N} x_\ell}{\prod_{1\leq\ell\leq N} x_\ell^a}
	\leq 
	x_1^{1-a}+\frac{2}{a x_1^a}\,.
	$$
\end{lemma}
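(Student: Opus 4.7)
My plan is to split the numerator into the dominant term ($\ell=1$) and the tail ($\ell\ge 2$), and estimate each piece separately. Precisely, I would write
\[
\frac{\sum_{\ell=1}^N x_\ell}{\prod_{\ell=1}^N x_\ell^a}
\;=\; \frac{x_1^{1-a}}{\prod_{m\ge 2} x_m^a} \;+\; \frac{1}{x_1^a}\sum_{\ell=2}^N \frac{x_\ell}{\prod_{m\ge 2} x_m^a}.
\]
For the first piece the bound $x_1^{1-a}$ (which is the first term on the right-hand side of the lemma) is immediate, since every $x_m\ge 2>1$ forces $\prod_{m\ge 2} x_m^a\ge 1$.

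For the tail it then suffices to establish the scale-free estimate
\[
\sum_{\ell=2}^N \frac{x_\ell}{\prod_{m\ge 2} x_m^a} \;\le\; \frac{2}{a}.
\]
Here the hypothesis $x_\ell\ge 2$ is essential: it forces $\prod_{m\ge 2,\,m\ne\ell} x_m^a \ge 2^{a(N-2)}$ to grow exponentially in $N$, while the numerator grows at most linearly. After this reduction the question becomes a one-variable optimization: maximize $n\mapsto n\cdot 2^{-an}$ over $n\ge 1$. Using the standard identity $\sup_{t>0} t e^{-t}=e^{-1}$ with $t=an\ln 2$ yields a maximum of order $1/(ae\ln 2)$, and the explicit constant $2$ comes from tracking $\ln 2\ge 1/2$ and $e>1$ in the resulting expression.

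The step requiring the most care is getting the sharp $x_1$-dependence of the tail bound: if one naively uses $x_\ell\le x_1$ inside the inner sum before summing, one picks up an extra factor of $x_1^{1-a}$ and only obtains a bound of order $x_1^{1-a}/a$ rather than the desired $1/(a x_1^a)$. The point of the proof is therefore to quarantine the $x_1$-dependence in the single factor $x_1^{-a}$ that is pulled out at the very beginning, and to control the remaining sum purely through the geometric decay coming from $x_\ell\ge 2$; combining the two pieces then yields the advertised bound $x_1^{1-a} + \frac{2}{a x_1^a}$.
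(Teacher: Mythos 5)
Your decomposition and the bound on the first piece are fine, but the ``scale-free'' tail estimate $\sum_{\ell=2}^{N} x_\ell\big/\prod_{m\ge 2}x_m^{a}\le 2/a$ is false, and it carries the whole content of the lemma. Already for $N=2$ that sum is the single term $x_2/x_2^{a}=x_2^{1-a}$, which is unbounded (and $x_2$ may be as large as $x_1$); the same failure occurs for every $N$ by taking $x_2=x_1$ large and $x_3=\dots=x_N=2$, where the $\ell=2$ summand equals $x_1^{1-a}2^{-a(N-2)}$. The flaw in the heuristic is that, after cancelling $x_\ell^{a}$, the $\ell$-th summand is $x_\ell^{1-a}\big/\prod_{m\ge 2,\,m\neq\ell}x_m^{a}$: the hypothesis $x_m\ge 2$ does give the exponential lower bound on the denominator, but it does nothing to cap the numerator $x_\ell^{1-a}$, which is controlled only through $x_\ell\le x_1$ --- precisely the $x_1$-dependence you claim to have quarantined. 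Your one-variable optimization of $n\mapsto n\,2^{-an}$ covers only the extreme case in which every $x_\ell$ equals $2$.

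This is not a repairable technicality within your scheme: the tail genuinely contributes a quantity of order $\tfrac1a x_1^{1-2a}$ (take $x_2=x_1$), which for $a<1$ dominates the target $\tfrac2a x_1^{-a}$. In fact the inequality as transcribed here seems to fail outright for small $a$: with $a=1/10$, $N=2$, $x_1=x_2=256$ one computes
\[
\frac{x_1+x_2}{(x_1x_2)^{a}}=\frac{512}{2^{1.6}}\approx 168.9,
\qquad\text{while}\qquad
x_1^{1-a}+\frac{2}{a\,x_1^{a}}=2^{7.2}+\frac{20}{2^{0.8}}\approx 158.5 .
\]
So before attempting a proof you should go back to the source (the statement is quoted from Lemma C.2 of \cite{BMP:2019}, and in the present paper it is only ever invoked with $a=1/2$, for which the estimate does hold) and check whether the second term should instead be of the form $\tfrac2a x_1^{1-2a}$. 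That corrected bound \emph{does} follow from the ``naive'' route you dismiss: bound $x_\ell^{1-a}\le x_1^{1-a}$ for $\ell\ge 2$, use $\prod_{2\le m\le N,\,m\ne\ell}x_m^{a}\ge 2^{a(N-2)}$, and only then optimize $(N-1)2^{-a(N-2)}\le 2/a$.
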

\begin{lemma} \label{pesto} Let $0 < \delta < e^{-3}$ and $f: [1,\infty] \to \R,\, y\mapsto-\delta y + \log(1 + y^2)$.  If $y \ge 4\delta^{-1}\log\frac{1}{\delta}$ then $$f(y) \le 0\,.$$
\end{lemma}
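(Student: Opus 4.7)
The inequality $f(y) \le 0$ asks for $\log(1+y^2) \le \delta y$, which is a competition between a logarithm and a linear function. The natural strategy is a two-step argument: (i) verify the inequality at the left endpoint $y_0 := 4\delta^{-1}\log(1/\delta)$, and (ii) show that $f$ is decreasing on $[y_0,\infty)$, so that the bound propagates to all larger $y$.

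For step (i), set $L:=\log(1/\delta)$, so that by hypothesis $L>3$. At $y_0=4L/\delta$ the desired inequality $\log(1+y_0^2)\le \delta y_0 = 4L$ is equivalent, after exponentiating, to
\[
1 + \frac{16 L^2}{\delta^2} \;\le\; e^{4L} \;=\; \delta^{-4},
\]
i.e.\ to $\delta^4 + 16 L^2 \delta^2 \le 1$. Since $L=\log(1/\delta)$ and $\delta<e^{-3}$, the quantity $L^2\delta^2$ decays to $0$ super-polynomially as $\delta\to 0$; a direct check at the worst case $\delta=e^{-3}$ gives $16\cdot 9\cdot e^{-6}\approx 0.36$, well under $1$. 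Thus $f(y_0)\le 0$.

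For step (ii), compute $f'(y) = -\delta + \tfrac{2y}{1+y^2}$. For $y\ge 2/\delta$ one has $\tfrac{2y}{1+y^2} \le \tfrac{2}{y}\le \delta$, so $f'(y)\le 0$. Since $L>3$ we have $y_0 = 4L/\delta > 12/\delta > 2/\delta$, and hence $f$ is non-increasing on $[y_0,\infty)$.

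Combining (i) and (ii), $f(y)\le f(y_0)\le 0$ for all $y\ge y_0 = 4\delta^{-1}\log(1/\delta)$. No step here is an obstacle; the only delicate point is the constraint $\delta<e^{-3}$, which is precisely what is needed to make the tail factor $16L^2\delta^2$ in step (i) comfortably smaller than $1$ and to guarantee $4L/\delta > 2/\delta$ in step (ii).
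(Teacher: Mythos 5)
Your proof is correct, but it follows a different route from the paper's. You verify the inequality at the left endpoint $y_0=4\delta^{-1}\log(1/\delta)$ by exponentiating (reducing it to $\delta^4+16\delta^2\log^2(1/\delta)\le 1$) and then propagate it by showing $f'\le 0$ on $[y_0,\infty)$ via $\tfrac{2y}{1+y^2}\le\tfrac2y\le\delta$. The paper instead argues pointwise for every $y$ in the range: it bounds $\log(1+y^2)\le 1+\log(\delta^2y^2)+\log(1/\delta^2)$, splits the linear term as $-\delta y=-\tfrac{\delta y}{2}-\tfrac{\delta y}{2}$, and uses the auxiliary function $F(z)=-\tfrac z2+\log z^2+1$ (nonpositive for $z\ge 12$, with $z=\delta y$) to absorb the scale-invariant part, leaving $-\tfrac{\delta y}{2}+2\log(1/\delta)\le 0$ for the rest; the hypothesis $\delta<e^{-3}$ enters there to guarantee $\delta y_0\ge 12$, whereas in your argument it enters through the numerical check $144e^{-6}<1$. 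Both are equally elementary; yours is the more standard calculus argument (endpoint plus monotonicity), while the paper's avoids differentiating $f$ at the cost of introducing $F$. One small thing you should make explicit: your claim that $\delta=e^{-3}$ is the worst case for $16\delta^2\log^2(1/\delta)$ is not justified by the mere fact that this quantity tends to $0$ as $\delta\to0$; you need the (easy) observation that $\delta\mapsto\delta^2\log^2(1/\delta)$ is increasing on $(0,e^{-1})$, since its derivative equals $2\delta\log(1/\delta)\left(\log(1/\delta)-1\right)>0$ there, so the supremum over $(0,e^{-3})$ is indeed controlled by the value at $e^{-3}$.
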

\begin{proof}
	We have $f(y)\le -\delta y + 1 + \log({\delta^2 y^2}) + \log{\frac{1}{\delta^2}}$. Let now consider the auxiliary function $$F(z) = -\frac{z}{2}  + \log z^2 + 1,$$ it has a maximum in $z_M=4$ then it is monotone decreasing, so there exists $z_0 $ such that $\forall z \ge z_0$  we have $F(z) \le 0$. One can take $z_0 = 12$. \\
	Then $$f(y) \le F(\delta y) - \frac{\delta y}{2}  + \log{\frac{1}{\delta^2}} \le  - \frac{\delta y}{2} + \log \frac{1}{\delta^2},$$ if $\delta y \ge 12$. Then, if $y \ge \frac{4}{\delta} \log \frac{1}{\delta}$, we got the thesis.
\end{proof}

\bibliographystyle{plain}
\bibliography{biblioAlmost2h}
\end{document}